\newcommand{\zz}[1]{\mathbb{#1}}
\def\q{\quad}
\def\0{\mathbf{0}}
\def\eps{\varepsilon}
\def\rr{\rightarrow}
\def\al{\alpha}
\def \< {\langle}
\def \> {\rangle}
\def\ol{\overline}
\def\ul{\underline}
\def\beqa{\begin{eqnarray}}
\def\eeqa{\end{eqnarray}}
\def\beqas{\begin{eqnarray*}}
\def\eeqas{\end{eqnarray*}}
\newtheorem{theorem}{Theorem}[section]
\newtheorem{lemma}[theorem]{Lemma}
\newtheorem{prop}[theorem]{Proposition}
\newtheorem{proposition}[theorem]{Proposition}
\newtheorem{cor}[theorem]{Corollary}
\newtheorem{remark}[theorem]{Remark}
\numberwithin{equation}{section}
\newcommand{\hatd}[1]{{}}
\newcommand{\bd}{\begin{displaymath}}
\newcommand{\ed}{\end{displaymath}}
\newcommand{\be}{\begin{equation}}
\newcommand{\ee}{\end{equation}}
\newcommand{\bq}{\begin{eqnarray}}
\newcommand{\eq}{\end{eqnarray}}
\newcommand{\bn}{\begin{eqnarray*}}
\newcommand{\en}{\end{eqnarray*}}
\newcommand{\W}{\mathcal{W}}
\def\wt{\widetilde}
\newcommand{\indic}{\mathds1}
\def\const{\rho}				       	   
\begin{document}

\title{On the Maximal Displacement of Subcritical Branching Random Walks\footnote{Research partially supported by GRF 606010 and 607013 of the HKSAR and the HKUST IAS Postdoctoral Fellowship.} }
\author{Eyal Neuman\footnote{Eyal Neuman would like to thank HKUST where most of the research was carried out.}, Xinghua Zheng}

\date{\today}

\maketitle

\begin{abstract}
We study the maximal displacement of a one dimensional subcritical branching random walk initiated by a single particle at the origin. For each $n\in\zz{N},$ let $M_{n}$ be the rightmost position reached by the branching random walk up to generation $n$.
Under the assumption that the offspring distribution has a finite third moment and the jump distribution has mean zero and
a {finite probability generating function}, we show that there exists $\const>1$ such that the function
\[
g(c,n):=\const^{cn} P(M_{n}\geq cn), \q \mbox{for each }c>0 \mbox{ and } n\in\zz{N},
\]
satisfies the following properties: there exist $0<\ul{\delta}\leq \ol{\delta} < {\infty}$ such that
if $c<\ul{\delta}$, then
\[
0<\liminf_{n\rr\infty} g (c,n)\leq \limsup_{n\rr\infty} g (c,n) {\leq 1},
\]
while
 if $c>\ol{\delta}$, then
\[
\lim_{n\rr\infty} g (c,n)=0.
\]
Moreover, if the jump distribution has a finite right range $R$, then $\ol{\delta} < R$. If furthermore the jump distribution is ``nearly right-continuous'',
then there exists $\kappa\in (0,1]$ such that $\lim_{n\rr \infty}g(c,n)=\kappa$ for all $c<\ul{\delta}$. We also show that the tail distribution of $M:=\sup_{n\geq 0}M_{n}$, namely, the rightmost position ever reached by the branching random walk, has a similar exponential decay
(without the cutoff at $\ul{\delta}$).
Finally, by duality, these results imply that the maximal displacement of supercritical branching random walks conditional on extinction has a similar tail behavior.
\end{abstract}

\section{Introduction and Main results}

Extreme values of spatial branching systems have been extensively studied over the past decades. Results on the asymptotic law for the maxima of branching Brownian motion trace back to Sawyer and Fleischman \cite{S-F78} and Lalley and Sellke \cite{Lalley-Sellke1987}. The work on the strong law of large numbers for the maxima of branching random walk trace back to Hammersley \cite{Hammersley}, Kingman \cite{Kingman}, Biggins \cite{Biggins} and Bramson \cite{Bramson:1978}, however results on the tail behavior of the maxima only appeared in resent years.

In branching processes we distinct among three subclasses according to the mean number of offspring, which we denote by $m$. The position of the rightmost particle at a specific generation for supercritical branching random walk (that is $m>1$)  was extensively studied in recent years, see for example \cite{Hu-Shi09,Bachmann:2000,Bramson-Z09,Aidekon2013,Bramson-Ding-Z:2014} and references therein. In particular  Aidekon proved in \cite{Aidekon2013}
that the centred law of the maximal displacement  converges to a random shift of the Gumbel distribution (see also \cite{Bramson-Ding-Z:2014}).

The case where the offspring distribution is critical, that is $m=1$, was recently studied by Lalley and Shao in \cite{LS15}. Let $M$ is the rightmost point ever reached by the branching random walk. It was proved in \cite{LS15} that when the jump distribution has mean $0$, then under some moment assumptions,
\bn
P\big(M\geq x\big) \sim \frac{\alpha}{x^{2}}, \textrm{ as } x \rr\infty .
\en
Here $\alpha$ is a constant which depends on the standard deviations of the jump distribution and the offspring distribution.

This paper is devoted to the study of the subcritical case, i.e. $m<1$.
Sawyer and Fleischman in \cite{S-F78} studied the law of the rightmost position ever reached by a subcritical branching Brownian motion. Using analytic methods it was proved in \cite{S-F78} that $h(x)=P(M>x)$ satisfies an ordinary differential equation
\bn
\frac{1}{2}h''(x)=-\phi\big(h(x)\big),
\en
where $\phi(\cdot)$ is the probability {generating} function of the offspring distribution.
By solving this equation it was shown in \cite{S-F78} that
\bq \label{Saw-Fl-res}
\lim_{x\rr\infty}\frac{P(M>x)}{(1-m)s(x)e^{-\sqrt{2(1-m)}\, x}}=1,
\eq
where $s(x)$ is a bounded positive function. One of the goals of this work is to
establish a similar exponential decay for the maximal displacement of subcritical branching random walk.
In fact, when the jump distribution has a finite right range and is ``nearly right-continuous'', our result is sharper than \eqref{Saw-Fl-res} in the sense that $s(x)$ can be replaced with a constant, see Theorem \ref{theorem-max-dis} below.

The other main focus of this paper is the maximal displacement of the local time of branching random walk.
The motivation for this comes from the study of population models, where sharp bounds on  local time processes are often key elements in the proofs. For example, in \cite{L-P-Z-2014}, a phase transition for the spatial measure-valued susceptible-infected-recovered (SIR) epidemic process is established. A key ingredient in the proof
is the growth rate of the support of the local time process (see discussion in Section 1.2 of \cite{L-P-Z-2014}).
The propagation of the support of the local time process was studied in \cite{iscoe} and \cite{pinsky}, for critical and supercritical super-Brownian motions which are scaling limits of critical and supercritical branching random walks. Results on the limiting measure-valued processes in most cases are not enough for  research on discrete particle systems. The reason is that after taking scaling limits, mass of the discrete process of lower order than the scaling dimension vanishes
(see for example the discussion after Proposition 23 in \cite{LS15}). In order to study fine properties of spatial discrete particle models, one needs to get more precise bounds on the support of discrete local time processes themselves.
Observe that the maxima of the support of the local time at a generation $n$ is nothing but
$M_{n}$, the rightmost position reached by the branching random walk up to generation~$n$. While the maximal displacement \emph{at} generation $n$ was studied for critical and supercritical branching particle systems, we could not find any previous work on the study of $M_{n}$ for discrete branching systems.

Before we state our main results, we define more carefully the branching random walk that we study.

\textbf{The model:} We consider a discrete time branching random walk on $\zz{Z}$ that lives on a probability space
$(\Omega, \mathcal F)$. In each generation, particles first jump (independently from each other) according to
a distribution $F_{RW}:= \{a_{y}\}_{y\in {\zz{Z}}}$, which has mean zero and finite variance,
and then each particle reproduces independently, according to an offspring distribution $F^{}_{B}:=\{p^{}_{k}\}_{k\geq 0}$,
which has expectation $m\in(0,1)$, variance $ \sigma^2 $ and
a finite third moment.

\begin{remark}\label{rmk:branch_jump_order}
Observe that under this model, particles jump first and then reproduce, just as in \cite{LS15}. In many other studies (see e.g. \cite{Bramson:1978, pinsky, L-Z2010, L-Z2011}),
the order is reversed, namely, particles reproduce first and then the offspring particles jump. The two ways of definition does affect the value of the maximal displacement,
see Remark 1 in \cite{LS15} for a simple example. However, the tail behaviors of the maximal displacement are the same up to a multiplicative constant, see Remark~3 therein and
equation
\eqref{Q-N}
below.
\end{remark}

To formulate things more precisely, below we borrow some notation from \cite{Aidekon2013}. We assume that the branching random walk is initiated by
a single particle at the origin. Let $\mathcal{T}$ be the genealogical Galton-Watson tree of this system.
For each vertex $v\in \mathcal T$, we denote by $|v|$ its generation, and $X(v)$ its position on the real line.
For each $n\geq 1$, let
\[
 Z_n = \#\{v\in \mathcal T: \ |v|= n\}
\]
be the number of particles at generation $n$.
The collection of positions $X:=\{X (v); v \in \mathcal{T} \}$ defines our branching system.
We study the
tail behaviors of the maximal displacements of $X$ up to generation $n$ and over all generations, namely,
\be \label{M-n-def}
M_{n} = \max_{v\in \mathcal T, \ |v|\leq n}X(v), \q\mbox{and}\q M = \sup_{n\geq 0}M_{n}.
\ee
Define
\bn
u_{n}^{}(x)=P^{}(M_{n}^{}\geq x)\q\mbox{and}\q u(x)=P^{}(M^{}\geq x), \q\mbox{for all } x \in \zz{Z}.
\en
Clearly $u_n(x)\leq u(x)$ for all $n$ and $x$, both are decreasing in $x$, and since $X$ dies out almost surely, we have
\[
\lim_{x\rr\infty} u(x) = 0.
\]
Before we present our first result, we introduce the following notation.
\paragraph{Notation.}
We denote by $W=\{W_{n}\}_{n \geq 0}$ a random walk on $\zz{Z}$ with the following law
\bn
P(W_{n+1}-W_{n}=k\ |\ W_{n},W_{n-1},...)=a_{k}, \ k \in \zz{Z}.
\en
For each $y\geq 0$,  define
\bq \label{tau-y}
\tau_{y}=\min\{n\geq 0: W_{n}\geq y\}.
\eq
We will use
$P^{x}, \ E^{x}$
to denote the probability and expectation under the distribution  of $\{W_{n}\}_{n \geq 0}$ with $W_{0}=~x$, and omit the superscript when $x=0$ (and when there is no confusion).

Further define $K(\theta)$ to be the probability generating function of $W_1$:
\be\label{pgf}
K(\theta)=E\big(\theta^{W_{1}}\big), \q\mbox{for } \theta\geq 1.
\ee
Throughout this work we assume that
\be \label{pgf-cond}
K(\theta) <\infty, \q \textrm{for all } \theta \geq 1.
\ee
We have $K(1)=1$ and $\lim_{\theta\rr\infty }K(\theta)=\infty$. Moreover, it is easy to verify that the condition $E(W_{1})=0$ ensures that $K'(\theta)>0$ for all $\theta>1$, hence for every $\gamma>1$,  there exists a unique solution to $K(s)=\gamma$ between $(1,\infty)$. Denote such a unique solution by $\const(\gamma)$.

Finally, we say that $W$ has a finite right range  $R(>0)$ if  $a_R>0$ and $a_{k}=0$ for all $k>R$. We further say that
$W$ is \textit{nearly right-continuous} if $a_{i}>0$ for all $i=1,\ldots,R$.

Now we are ready to present our first result.
The following theorem gives  the asymptotic behavior of $u(x)$ as $x\rr\infty$, which is the branching random walk analog to the result of Sawyer and Fleischman in \cite{S-F78}.
\begin{theorem} \label{theorem-max-dis}
Assume that the offspring distribution $F_{B}$ has a finite third moment and that the jump distribution $F_{RW}$ {satisfies (\ref{pgf-cond})}, then
\begin{itemize}
\item[\bf{(a)}]
$\ell(n) := \const\big(m^{-1}\big)^{n}u(n)$
satisfies that
\[
0<\liminf_{n \rr\infty }\ell(n)\leq \limsup_{n \rr\infty }\ell(n) \leq 1.
\]
\item[\bf{(b)}] Moreover,  if $W$ {has a finite right range $R$} and is nearly right-continuous, then there exists $\kappa\in(0,1]$ such that
\bn
\lim_{n\rr\infty} \ell(n) = \kappa.
\en
\item[\bf{(c)}] In either case,
\bq \label{lim-C}
\log \const(m^{-1})=-\lim_{n\rr\infty}\frac{\log E\big(m^{\tau_{n}}\big)}{n}.
\eq
\end{itemize}
\end{theorem}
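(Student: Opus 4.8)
The plan is to reduce the branching random walk problem to a statement about a single random walk and its overshoot of a barrier, and then use subadditivity. I would prove part (c) first, since it identifies the exponential rate $\log\const(m^{-1})$ with a random-walk quantity and is the backbone of part (a). The starting point is the recursion obtained by conditioning on the first generation: since the initial particle jumps once (call its displacement $W_1$) and then produces $N$ offspring, one has, for the ``ever reached'' maximum,
\begin{equation}\label{Q-N}
1-u(x) = E\Big[\big(1-u(x-W_1)\big)^{N}\Big] \quad \text{with } u(x)=1 \text{ for } x\le 0,
\end{equation}
and a corresponding generation-$n$ version for $u_n$. Iterating this, or rather exploiting the many-to-one / spinal decomposition, the quantity $P(M\ge x)$ is comparable to the expected number of particles ever reaching level $x$, which by the many-to-one lemma equals $E^{0}\big[m^{\tau_x}\big]$ up to the event of multiple crossings; the finite third moment of $F_B$ is exactly what controls the second-moment/overcounting correction so that $u(x)\asymp E^{0}[m^{\tau_x}]$. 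Thus the first key step is the two-sided bound $c_1 E[m^{\tau_x}]\le u(x)\le E[m^{\tau_x}]$ for suitable constants.

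The second step is to analyze $a_x := E^0[m^{\tau_x}]$. Here $\{\tau_x\}$ is (approximately) subadditive in the sense that $\tau_{x+y}\le \tau_x + \tau'_y$ where $\tau'_y$ is an independent copy started from the overshoot position $W_{\tau_x}\ge x$; dropping the overshoot (which only helps, since $m<1$) gives $E[m^{\tau_{x+y}}]\ge$ something, and going the other way gives $E[m^{\tau_{x+y}}]\ge E[m^{\tau_x}]E[m^{\tau_y}]$ — i.e.\ $\log a_x$ is superadditive — so by Fekete's lemma $-\tfrac1n\log a_n$ converges to $\inf_n (-\tfrac1n \log a_n) =: L\in[0,\infty)$. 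To identify $L$ with $\log\const(m^{-1})$, I would use the exponential change of measure / optional stopping: for $\theta>1$, $\{(\theta^{W_n}/K(\theta)^n)\}$ is a martingale, so $E[\theta^{W_{\tau_x}}K(\theta)^{-\tau_x}]=1$ (after justifying the stopping via (\ref{pgf-cond}) and $\tau_x<\infty$ a.s., which holds because $W$ is recurrent having mean zero). Choosing $\theta=\const(m^{-1})$, i.e.\ $K(\theta)=1/m$, turns this into $E\big[\const(m^{-1})^{W_{\tau_x}} m^{\tau_x}\big]=1$, and since $x\le W_{\tau_x}\le x+R$ (or, without a finite range, since the overshoot has exponentially small tails under the tilted measure, again by (\ref{pgf-cond})), one gets $\const(m^{-1})^{-x-O(1)}\le E[m^{\tau_x}]\le \const(m^{-1})^{-x}$, which pins down $L=\log\const(m^{-1})$ and simultaneously gives the $\limsup\le 1$, $\liminf>0$ bounds of part (a).

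For part (b), the nearly-right-continuous hypothesis forces the overshoot $W_{\tau_n}-n$ to have a genuine limiting distribution: because every step size $1,\dots,R$ has positive probability, the ladder structure is aperiodic and the overshoot over a high level converges in law to the stationary overshoot distribution $\pi$ (renewal theorem for the ascending ladder heights, whose mean is finite thanks to (\ref{pgf-cond})). Then $\const(m^{-1})^{n}E[m^{\tau_n}] = E\big[\const(m^{-1})^{-(W_{\tau_n}-n)}\big]\to \int \const(m^{-1})^{-y}\,\pi(dy) =: \kappa'\in(0,1]$, and combined with the convergence of the overcounting correction (the offspring tree structure attached to the spine stabilizes, using the third moment to get an honest limit rather than just bounds) one obtains $\ell(n)\to\kappa$ for a genuine constant $\kappa=\kappa'\cdot(\text{correction})\in(0,1]$. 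The main obstacle, and where I expect the real work to lie, is the first step: making the comparison $u(x)\asymp E[m^{\tau_x}]$ rigorous, i.e.\ controlling the second moment of the number of crossing particles (so the expected count is not wildly larger than the probability of at least one crossing) and controlling, in part (b), how the attached subtrees modify the limiting constant — this is precisely where the finite-third-moment assumption on $F_B$ must be used, and it is more delicate than the random-walk asymptotics, which are essentially classical renewal theory plus an exponential tilt.
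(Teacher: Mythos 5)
Your part (c) skeleton—subadditivity of $\tau_x$ (superadditivity of $\log E[m^{\tau_x}]$), Fekete's lemma, and identification of the rate via the exponential martingale $\theta^{W_n}K(\theta)^{-n}$ stopped at $\tau_x$—is essentially the same as the paper's Proposition~\ref{lemma-con-pgf} together with the identity $\sum_k w_k=1$ in Lemma~\ref{lemma-lineq}; that part is sound. The real difficulty is elsewhere, and that is where your proposal has a genuine gap.

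For parts (a) and (b) you hinge everything on the two-sided comparison $u(x)\asymp E[m^{\tau_x}]$, with the upper bound by a first-moment (many-to-one) count of crossing particles and the lower bound by a second-moment / Paley--Zygmund argument whose details you defer. That lower bound is precisely the hard part, and it is not clear it even yields the statement as written: a Paley--Zygmund bound gives $u(x)\geq c\,E[m^{\tau_x}]$ with an unspecified constant $c$, which would prove $\liminf\ell(n)>0$ but leaves $\limsup\ell(n)\leq 1$ dependent on the sharp inequality $u(x)\leq E[m^{\tau_x}]$ (which is fine) \emph{together with} $E[m^{\tau_x}]\leq\rho^{-x}$ (also fine), but the lower bound $E[m^{\tau_x}]\geq c'\rho^{-x}$ runs into the overshoot correlation: from $1=E[\rho^{W_{\tau_x}}m^{\tau_x}]$ one cannot simply ``pull out'' $\rho^{W_{\tau_x}-x}\leq\rho^{O(1)}$ unless the right range is finite, and without it you would need to decouple the overshoot from $m^{\tau_x}$, which you do not do. The paper avoids the entire second-moment and decoupling problem by \emph{not} comparing $u$ to $E[m^{\tau_x}]$: Section~\ref{sec-f-c} builds the exact martingale $Y_n$ and the Feynman--Kac identity $u(x)=E^{x}\bigl[m^{\bar\tau_y}u(\W_{\bar\tau_y})\prod_{j\leq\bar\tau_y}(1-H(u(\W_j)))\bigr]$, in which the ``overcounting correction'' you gesture at appears as the explicit multiplicative factor $\prod(1-H(u(\W_j)))$, and the finite third moment is used only to get $H(s)\leq Cs$ for small $s$ (via the $O(s^3)$ term in the Taylor expansion of $Q$), not to control a second moment of the crossing count. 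Combined with the already-established exponential decay of $u$ (Lemma~\ref{lemma-lim-w}, which is just subadditivity of $-\log u$), this correction factor is $1-O(\rho^{-n/2})$ along the relevant paths, and the recursion $\ell(n+1)=\sum_k w_k\ell(n-k)\;(1+o(1))$ with $\sum w_k=1$ does the rest.

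For part (b) the gap is more serious. You invoke the renewal theorem for the ascending ladder heights to get a limiting overshoot law, plus ``convergence of the overcounting correction,'' but the latter is exactly the hard content and you leave it entirely unspecified. The paper's Lemma~\ref{lemma-lim} instead treats the recursion $\ell(n+1)\approx\sum_{k=0}^{R-1}w_k\ell(n-k)$ as a finite-memory linear recursion with strictly positive weights summing to 1 (this positivity is where near right continuity enters), and shows by an elementary iterative argument that any subsequential $\limsup$ propagates backwards and forwards, forcing $\lim\ell(n)$ to exist. This is a direct Perron--Frobenius-type argument on the recursion, and it is also what makes transparent why near right continuity is needed (compare Remark~\ref{rem-special-cases2}: $a_{-2}=a_2=1/2$ gives periodicity and the limit fails). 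Your renewal-theoretic framing might eventually be made to work for the pure random-walk quantity $E[m^{\tau_n}]$, but bridging from there to $u(n)$ would again require the unestablished comparison.
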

\begin{remark}  \label{rem-special-cases}
In the case of nearest neighbor branching random walk, namely, when $a_{1}=a_{-1}=1/2$, we have $\const(m^{-1})=(1+\sqrt{1-m^{2}})/m$,
and it is easy to verify, by conditioning on the first step, that $(1+\sqrt{1-m^{2}})/m=1/E\big(m^{\tau_{1}}\big)\equiv 1/\left(E\big(m^{\tau_{n}}\big)\right)^{1/n}$ for all $n\in\zz{N}$.
\end{remark}

\begin{remark}  \label{rem-special-cases2}
In the case where the jump distribution is such that $a_{-2}=1/2$ and $a_{2}=1/2$, we have $u(2n)=u(2n-1)$. It is then easy to see that $\lim_{n\rr\infty} \ell(n)$ does not exist.
\end{remark}

\begin{remark}\label{rmk:RW_exp_tail}
The assumption \eqref{pgf-cond} can be weakened to be $K(\theta)<\infty$ for some $\theta > \rho(m^{-1}).$ On the other hand, since $P(M \geq n) \geq (1-p_0) P(W_1 \geq n)$,
an exponential decaying tail of $W_1$ is necessary for that of $M$.
\end{remark}

We now move on to our second main result, concerning
the maximal displacement of~$X$ up to generation $n$. This result also provides insights into why $M$ has such an exponential decay.

\begin{theorem} \label{theorem-max-dis-upto}
Assume that the offspring distribution $F_{B}$ has a finite third moment and that the jump distribution $F_{RW}$ satisfies (\ref{pgf-cond}). For $c\geq 0$ and $n\in\zz{N}$, define $g(c, n) = \const\big(m^{-1}\big)^{cn}P(M_{n}\geq cn)$.
There exist $0<\ul{\delta}\leq\ol{\delta}< {\infty}$ such that
\begin{itemize}
\item[\bf{(a)}] For every $c\in(0,\ul{\delta})$,
we have
    \[
     0<\liminf_{n\rr \infty}g(c,n)\leq \limsup_{n\rr \infty}g(c,n) \leq 1.
     \]
\item[\bf{(b)}] Moreover, if $W$ {has a finite right range $R$} and is nearly right-continuous, then
\bn
\lim_{n\rr\infty} g(c,n) = \kappa, \q \mbox{for all } c\in(0,\ul{\delta}),
\en
where $\kappa$ is the same constant that appears in Theorem \ref{theorem-max-dis}(b).
\item[\bf{(c)}] For every $c>\ol{\delta}$, we have $\lim_{n\rr \infty}g(c,n) = 0$. Moreover, if $W$ has a finite right range $R$, then $\ol{\delta} < R$.
\end{itemize}
\end{theorem}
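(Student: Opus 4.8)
The plan is to reduce the branching statement to questions about the single random walk $W$ by a first-moment (many-to-one) computation, to handle the ``all generations'' maximum $M$ by quoting Theorem~\ref{theorem-max-dis}, and to control the gap between $M$ and $M_n$ through two large-deviation estimates for the first-passage time $\tau_y$ of \eqref{tau-y}. Throughout I write $\const$ for $\const(m^{-1})$, so that $K(\const)=m^{-1}$. For an integer level $x\ge1$ let $N_x^{(n)}$ be the number of vertices $v$ with $|v|\le n$ that are \emph{first crossings} of level $x$ (i.e. $X(v)\ge x$ but $X(u)<x$ for every strict ancestor $u$). Then $\{M_n\ge x\}=\{N_x^{(n)}\ge1\}$ and, by the path version of the many-to-one lemma,
\[
E\big[N_x^{(n)}\big]=\sum_{k=1}^{n}m^{k}P(\tau_x=k)=E\big[m^{\tau_x}\indic_{\{\tau_x\le n\}}\big],
\]
so $P(M_n\ge x)\le E[m^{\tau_x}\indic_{\{\tau_x\le n\}}]\le E[m^{\tau_x}]$. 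Since $m^{k}\const^{W_k}$ is a nonnegative martingale started at $1$, optional stopping at $\tau_x\wedge n$ and Fatou's lemma (using $W_{\tau_x}\ge x$) give $E[m^{\tau_x}]\le\const^{-x}$; taking $x=\lceil cn\rceil$ yields $g(c,n)\le1$ for all $c>0$ and $n\in\zz N$, which already proves the two ``$\le1$'' bounds in (a).

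For (c) I would keep the first moment but exploit the constraint $\tau_x\le n$. Let $I(v)=\sup_{\theta\ge1}\big(v\log\theta-\log K(\theta)\big)$ be the one-sided Cram\'er rate function of $W_1$, so $P(\tau_x=k)\le P(W_k\ge x)\le e^{-kI(x/k)}$, hence $m^{k}P(\tau_x=k)\le e^{-x\,\phi(x/k)}$ with $\phi(v):=\big(I(v)+\log(1/m)\big)/v$. Summing over $k\le n$ gives $g(c,n)\le n\exp\!\big(-cn(\inf_{v\ge c}\phi(v)-\log\const)\big)$. The elementary point is that $\phi$ is strictly unimodal: writing $h(v):=vI'(v)-I(v)-\log(1/m)$ one has $\phi'(v)=h(v)/v^2$, $h'(v)=vI''(v)>0$, $h(0^+)=-\log(1/m)<0$, while $h(v)\to+\infty$ as $v$ approaches the right endpoint of the support of $W_1$ (because $I'(v)\to\infty$ there — this is where \eqref{pgf-cond} is used). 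So $\phi$ has a unique minimizer $v^\ast$, and by \eqref{lim-C} together with a Laplace-type evaluation of $E[m^{\tau_n}]=\sum_k m^{k}P(\tau_n=k)$ one identifies $\phi(v^\ast)=\log\const$. Taking $\ol\delta:=v^\ast$, for $c>v^\ast$ one gets $\inf_{v\ge c}\phi(v)=\phi(c)>\log\const$, so $g(c,n)\to0$. When $W$ has a finite right range $R$, the same computation shows $v^\ast<R$ (since $I'(R^-)=\infty$, $\phi$ is already increasing near $R$), hence $\ol\delta<R$; and $g(c,n)=0$ outright for $c>R$ because $M_n\le Rn$.

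For (a) and (b) I would avoid any second-moment argument and instead transfer Theorem~\ref{theorem-max-dis}. Since $u_n(x)\uparrow u(x)$ and, applying the many-to-one bound to first crossings occurring \emph{after} generation $n$,
\[
u(x)-u_n(x)\le E\big[m^{\tau_x}\indic_{\{\tau_x>n\}}\big]=\sum_{k>n}m^{k}P(\tau_x=k),
\]
it suffices to show $\const^{cn}\sum_{k>n}m^{k}P(\tau_{\lceil cn\rceil}=k)\to0$ for all small $c$. On $\{\tau_x=k\}$ with $k>n$ the walk reaches $x$ at the \emph{slow} speed $x/k<x/n\approx c$; the same rate-function bound gives $\sum_{k>n}e^{-x\phi(x/k)}$, which (by a Laplace-type estimate, using that $\phi$ is strictly decreasing on $(0,v^\ast)$) is of order $e^{-cn\phi(c)}=o(\const^{-cn})$ as soon as $c<v^\ast$. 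Setting $\ul\delta:=v^\ast$ and combining with Theorem~\ref{theorem-max-dis}(a), for $c\in(0,\ul\delta)$,
\[
\liminf_{n\rr\infty}g(c,n)\ \ge\ \liminf_{n\rr\infty}\const^{cn}u(\lceil cn\rceil)\ \ge\ \const^{-1}\liminf_{k\rr\infty}\ell(k)\ >\ 0,
\]
which together with $g(c,n)\le1$ proves (a); under the nearly-right-continuous hypothesis Theorem~\ref{theorem-max-dis}(b) gives $\const^{k}u(k)\to\kappa$, and the same computation yields $g(c,n)\to\kappa$, which is (b). (With these choices $\ul\delta=\ol\delta=v^\ast$; the theorem only asserts existence of some $0<\ul\delta\le\ol\delta<\infty$, which is all that is needed if one prefers the looser thresholds coming directly from the two estimates.)

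The soft part is the first-moment reduction and the appeal to Theorem~\ref{theorem-max-dis}; the genuine work lies in the two random-walk large-deviation estimates — a uniform-in-$x$ upper bound on the $m$-discounted weight of \emph{fast} passages $\{\tau_x\le n\}$ with $x/n$ large (for (c)), and the analogous bound on \emph{slow} passages $\{\tau_x>n\}$ (for (a)--(b)) — together with the Laplace-principle identification $\phi(v^\ast)=\log\const$. Making these precise, with attention to the lattice structure and to the behavior of $I$ near the edge of a finite range (where \eqref{pgf-cond} is essential), is the main obstacle; the rest is bookkeeping.
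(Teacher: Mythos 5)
Your proposal is correct in substance and takes a genuinely different route from the paper in parts (a) and (b), while the treatment of (c) is in the same spirit as the paper's but more cleanly packaged. For (a)--(b), the paper dispenses with the gap $u(\lceil cn\rceil)-u_n(\lceil cn\rceil)$ via the one-line estimate $P(M\ge cn,\,M_n<cn)\le P(Z_n\ge1)\le m^n$, which forces the (quite crude) threshold $\ul\delta=-\log m/\log\const$. You instead control the same gap by a many-to-one count of \emph{late} first crossings, $u(x)-u_n(x)\le\sum_{k>n}m^kP(\tau_x=k)$, and then a Cram\'er bound; the resulting threshold is the minimizer $v^\ast$ of $\phi$, which (by strict convexity of $\lambda\mapsto\log K(e^\lambda)$) is strictly larger than $-\log m/\log\const$. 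For (c), the paper already uses the first-moment bound $P(M_{an}\ge n)\le\sum_{j\le an}m^jP(W_j\ge n)$ plus exponential tilting; your version via the rate function $I$, the auxiliary $h=vI'-I-\log(1/m)$, and Legendre duality is essentially the same computation, but makes transparent the identity $\phi(v^\ast)=\log\const$ (this is a direct Legendre calculation --- $h(v^\ast)=0$ forces $I'(v^\ast)=\log\const$ --- so the detour through \eqref{lim-C} and a Laplace-type matching lower bound is unnecessary and, as you state it, unjustified as written). What the sharper thresholds buy is noteworthy: with $\ul\delta=\ol\delta=v^\ast$ you would in fact settle the conjecture of Remark \ref{rmk:jump}, which the paper only verifies in the special nearest-neighbor example of Section \ref{sec-Heuristics}. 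Two small points to flesh out: (i) in the tail sum $\sum_{k>n}m^kP(\tau_x=k)$ with $x=\lceil cn\rceil$, the increments of $k\mapsto k(\log(1/m)+I(x/k))$ equal $-h(x/k)$, which for $k>n$ satisfy $x/k<\lceil cn\rceil/n$; this is slightly above $c$, so you need to pick $c'\in(c,v^\ast)$ and $n$ large before the increments are bounded below by $-h(c')>0$; and (ii) the convergence claimed in (b) is really convergence of $\const^{\lceil cn\rceil}u_n(\lceil cn\rceil)$ --- the factor $\const^{cn-\lceil cn\rceil}$ oscillates for generic $c$, a lattice issue that is equally present in the paper's proof and presumably reflects the intended reading of $g(c,n)$.
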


A few remarks about the theorem are in order.

\begin{remark}\label{rmk:M_Mn}
We prove parts (a) and (b) by showing that there exists $a>0$ such that $P(M_{an}\geq n\ |\ M\geq n)\to 1$, see Corollary \ref{cor:M_Mn} below. Roughly speaking, in order for $M\geq n$, either the branching random walk spreads out abnormally in a linear speed, or it spreads out like an ordinary random walk in which case the process has to survive $O(n^2)$ generations. Corollary \ref{cor:M_Mn} indicates that the first possibility dominates, and the exponential decay of $M$ is due to both subcriticality of the branching process and the large deviation of the random walk.
\end{remark}

\begin{remark}\label{rmk:part_c}
About part (c), in the case where $W$ has a finite right range $R$,  one has  $P(M_{n}\geq cn)\equiv 0$ for all $c>R$, hence $g(c,n)\to 0$ trivially holds.
The significance of part (c) in this case lies in that there exists $c<R$ such that $\const\big(m^{-1}\big)^{cn}P(M_{n}\geq cn)\to~0$.
\end{remark}

\begin{remark}\label{rmk:jump}
We conjecture that $\ul{\delta}=\ol{\delta}:=\delta^*$, in other words, the function
\[
\psi(c):= \liminf_{n\rr\infty} g(c,n)
\]
is  positive for $c<\delta^*$ and equals $0$ for $c>\delta^*$. Such a strange phase transition (if our conjecture were true) has to do with the \emph{local small deviations} of the first passage times of the associated random walk. To the end of Section \ref{sec-Heuristics}, we prove this conjecture for a special subcritical branching random walk and we also derive an exact {local small deviation} result for the first passage times of the nearest neighbor random walk.
\end{remark}

Before we state the next result, we recall the duality principle which states that a supercritical branching process conditional on extinction has the same distribution as its dual subcritical process (see, for example, Theorem 3 in Chapter I.12 in \cite{Athreya-1972}). More specifically,
let $\overline Z= \{\overline Z_{n}\}_{n\geq 0}$ be a Galton-Watson process with $\ol{Z}_0=1$ and an offspring distribution $\overline F_{B} = \{\ol{p}_i\}_{i\geq 0}$ which has mean $m>1$ and $\ol{p}_0>0$.
Define
\bn
B^{}=\{\omega:\overline Z^{}_{n}(\omega)=0, \textrm{ for some } n\geq 1\}
\en
be the event of extinction, and let $q=P(B)\in (0,1)$.
Then the duality principle says that the process $\{\overline Z_{n}\}_{n\geq 0}$
conditional on the event $B$ has the same distribution as a subcritical Galton-Watson branching process $\{ \wt Z_{n}\}_{n\geq 0}$  with $\wt Z^{}_{0}=1$
and
\bn
E^{}\big(\theta^{\wt Z^{}_{1}}\big)  \equiv  \frac {\ol{f}(\theta q)}{q^{}}, \q \theta \in (0,1),
\en
where $\bar f^{}$ denotes the probability generating function of $\overline F_{B}$.

The duality principle allows us to extend Theorems \ref{theorem-max-dis} and \ref{theorem-max-dis-upto} to the supercritical case and obtain  analogous results about the maximal displacement of supercritical branching random walk conditional on extinction. We state the results below without giving the proofs.

To be more specific, suppose that $\overline X$ is a branching random walk which satisfies the same assumptions as $X$ except that it has an offspring distribution $\overline F_{B}$ as above.
Define $\overline M$ to be the maximal displacement of $\overline X$ over all generations as in (\ref{M-n-def}), and let
\[
 \ol{u}(n)=P^{}(\ol{M}^{}\geq n), \q\mbox{for all } n \in \zz{Z}.
\]
 Denote by $\bar f'$  the first derivative of~$\bar f$.

\begin{prop} \label{prop--sub-max-dis}
Assume that the offspring distribution $\overline F_{B}$ has a finite third moment and that the jump distribution $F_{RW}$ {satisfies (\ref{pgf-cond})}. Denote  $\ol{\rho} = \rho\big((\bar f'(q))^{-1}\big)$.
 Then we have
\begin{itemize}
\item[\bf{(a)}] $\ol{\ell}(n) := \ol{\rho}^{n}\big(\bar u(n)-(1-q)\big)$
satisfies that
\[
0<\liminf_{n\rr \infty} \ol{\ell}(n)\leq\limsup_{n\rr \infty} \ol{\ell}(n)\leq q.
\]
\item[\bf{(b)}] Moreover, if $W$ {has a finite right range $R$} and is nearly right-continuous, then there exists $\kappa'\in(0,q]$ such that
\bn
\lim_{n\rr\infty} \ol{\ell}(n) = \kappa'.
\en
\end{itemize}
\end{prop}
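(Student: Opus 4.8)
The plan is to derive Proposition~\ref{prop--sub-max-dis} from the subcritical Theorem~\ref{theorem-max-dis} by means of the duality principle recalled above; essentially no new probabilistic estimate is required, only a careful translation.

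\emph{Step 1 (peeling off the survival event).} Write $\bar u(n)=P(\ol{M}\geq n,\,B)+P(\ol{M}\geq n,\,B^{c})$, where $B$ is the extinction event. On $B^{c}$ the process $\ol X$ is a supercritical branching random walk with mean-zero increments whose probability generating function is finite everywhere by \eqref{pgf-cond}, so by the Hammersley--Kingman--Biggins theorem $\ol{M}_{n}/n$ converges a.s.\ on $B^{c}$ to the speed $\gamma^{*}=\inf_{\theta>0}\theta^{-1}\log\!\big(m\,E(e^{\theta W_{1}})\big)$, and $\gamma^{*}>0$: since $\log m>0$ and $\theta\mapsto\theta^{-1}\log E(e^{\theta W_{1}})$ is nonnegative (by Jensen), nondecreasing in $\theta$, and not identically zero ($W_{1}$ being nondegenerate), the infimum over $\theta>0$ is bounded below by a strictly positive constant. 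Hence $\ol{M}=\sup_{n}\ol{M}_{n}=+\infty$ a.s.\ on $B^{c}$, so $P(\ol{M}\geq n,\,B^{c})=P(B^{c})=1-q$ for every $n$, and therefore $\bar u(n)-(1-q)=q\,P(\ol{M}\geq n\mid B)$.

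\emph{Step 2 (duality on the labelled tree).} The event $B$ is measurable with respect to the genealogical Galton--Watson tree of $\ol X$, and conditionally on $B$ that tree is again a Galton--Watson tree, now with offspring probability generating function $\wt f(\theta)=\bar f(\theta q)/q$. Since the spatial displacements defining $\ol X$ are i.i.d.\ $\sim F_{RW}$ and independent of the tree structure, it follows that conditionally on $B$ the branching random walk $\ol X$ has the law of a subcritical branching random walk $\wt X$ built on the offspring law $\wt f$ and the same jump law $F_{RW}$; in particular $P(\ol{M}\geq n\mid B)=\wt u(n):=P(\wt M\geq n)$. The law $\wt f$ has mean $\wt m=\wt f'(1)=\bar f'(q)\in(0,1)$, by the standard properties of the extinction probability of a supercritical Galton--Watson process, and since $\bar f$ has radius of convergence at least $1$ while $q<1$, the function $\wt f$ is analytic at $\theta=1$, so $\wt Z_{1}$ has moments of every order. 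Thus $\wt X$ satisfies all the hypotheses of Theorem~\ref{theorem-max-dis}, and moreover $\rho(\wt m^{-1})=\rho\big((\bar f'(q))^{-1}\big)=\ol{\rho}$.

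\emph{Step 3 (applying Theorem~\ref{theorem-max-dis}).} Part (a) of that theorem applied to $\wt X$ gives $0<\liminf_{n}\ell(n)\leq\limsup_{n}\ell(n)\leq 1$ for $\ell(n):=\ol{\rho}^{\,n}\wt u(n)$; since $\ol{\ell}(n)=\ol{\rho}^{\,n}\big(\bar u(n)-(1-q)\big)=q\,\ell(n)$, part (a) of the Proposition follows with all three quantities multiplied by $q$. For part (b): $\wt X$ has the same jump law as $\ol X$, so if $W$ has a finite right range $R$ and is nearly right-continuous, Theorem~\ref{theorem-max-dis}(b) yields $\ell(n)\to\kappa''$ for some $\kappa''\in(0,1]$, whence $\ol{\ell}(n)\to q\kappa''=:\kappa'\in(0,q]$. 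Once the two key inputs are in hand the rest is bookkeeping; the step I expect to require the most care is Step~2 --- upgrading the classical process-level duality first to the genealogical tree and then to the full space--time description of $\ol X$ --- although this is transparent once one notes that $B$ depends only on the tree while the displacements are i.i.d.\ and independent of it. The one genuinely external ingredient is the strict positivity of the Biggins speed invoked in Step~1 to ensure $\ol{M}=\infty$ a.s.\ on survival.
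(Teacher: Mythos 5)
The paper explicitly declines to prove Proposition~\ref{prop--sub-max-dis} (``We state the results below without giving the proofs''), so there is no paper proof to compare against; what the paper does provide is the duality recipe immediately preceding the statement, and your proof is a correct and careful implementation of exactly that recipe. Your three steps are sound: (i) on $B^{c}$ you need $\ol{M}=\infty$ a.s., which you get from Biggins' speed theorem --- a slightly heavier hammer than necessary (one could simply note that on $B^{c}$ the tree has an infinite ray, the positions along it form a mean-zero recurrent walk on $\zz{Z}$ independent of the tree, hence have $\limsup=\infty$), but the argument is correct, the speed $\gamma^{*}$ is indeed strictly positive since both $\theta^{-1}\log m$ and $\theta^{-1}\Lambda(\theta)$ are nonnegative and cannot simultaneously be small; (ii) lifting the Athreya--Ney duality from the Galton--Watson process level to the full labelled tree and then to the spatial process is the genuinely substantive step, and you handle it correctly by observing that $B$ is tree-measurable while the displacement labels are i.i.d.\ and independent of the tree, so conditioning on $B$ replaces the offspring law by $\wt f(\theta)=\bar f(\theta q)/q$ and leaves the jump law untouched; your observation that $\wt f$ is analytic at $1$ (because $q<1$ lies strictly inside the radius of convergence of $\bar f$) neatly disposes of the moment hypothesis for Theorem~\ref{theorem-max-dis}; (iii) the final bookkeeping $\ol{\ell}(n)=q\,\ell(n)$ with $\ell(n)=\ol{\rho}^{\,n}\wt u(n)$ correctly transfers both parts of Theorem~\ref{theorem-max-dis}. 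No gaps.
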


\paragraph{Organization of the paper:}
The rest of this paper is devoted to the proofs of
Theorems \ref{theorem-max-dis} and \ref{theorem-max-dis-upto}.
In Section \ref{sec-Heuristics} we give  heuristic explanations to Theorems~\ref{theorem-max-dis} and~\ref{theorem-max-dis-upto}
by considering a special subcritical branching random walk. We prove the conjecture in Remark \ref{rmk:jump} in this case,
and as a by-product, we establish a {local small deviation} result for the first passage times of the nearest neighbor random walk. Sections \ref{sec-proof-max-dis}--\ref{sec-proof-theorem-max-dis} are devoted to the proof of Theorem \ref{theorem-max-dis}.
In Section \ref{sec-proof-max-dis} we prove that $\lim_{n\rr\infty} (-\log u(n)/n)$ exists and is positive.
In Section \ref{sec-f-c} we derive a discrete Feynman-Kac formula for~$u(n)$.
Equipped with these tools we prove Theorem \ref{theorem-max-dis} in Section \ref{sec-proof-theorem-max-dis}.
Finally, we prove Theorem \ref{theorem-max-dis-upto} in Section \ref{Sec-pf-upto}.

\section{Heuristics for Theorems \ref{theorem-max-dis} and \ref{theorem-max-dis-upto}} \label{sec-Heuristics}
In this section we give  heuristic  interpretations to Theorems \ref{theorem-max-dis} and \ref{theorem-max-dis-upto} for a special branching random walk.

Let $X^{s}$ be a branching random walk such that in every step, each particle produces either no offspring with probability $p_{0}$, or one offspring with probability $1-p_{0}$. We further assume that the single particle in this model performs nearest neighbor random walk. Let $M^{s}_n$ and $M^{s}$ be the maximal displacements of $X^{s}$ up to generation~$n$ and over all generations as in (\ref{M-n-def}).
From the Markov property, we have for every $n\geq 1$,
\bn
P\big(M^{s}\geq n\big)
&=& P\big(M^{s} \geq 1\big)\ P\big(M^{s}\geq n|M^{s}\geq1\big)  \nonumber \\
&=&P\big(M^{s}\geq 1\big)^{n},
\en
in other words, $M^{s}$ has a geometric law with parameter $P\big(M^{s}<1\big)$. On the other hand, we have
\[
P\big(M^{s}\geq1\big)
=\sum_{j\geq 1}(1-p_{0})^{j}P\big(\tau_{1}=j)
=E\big( (1-p_{0})^{\tau_{1}} \big),
\]
where $\tau_{\cdot}$ is the first passage time defined in \eqref{tau-y}.
Furthermore, as we pointed out in Remark \ref{rem-special-cases},
\[
\const =\const\big((1-p_{0})^{-1}\big)= \frac{1}{E\big( (1-p_{0})^{\tau_{1}} \big)}.
\]
Hence
\be\label{eqn:exp_decay_nnbrw}
\const^{n}P\big(M^{s}\geq n\big)\equiv 1,
\ee
which agrees with Theorem \ref{theorem-max-dis}.

On the other hand, to see how Theorem  \ref{theorem-max-dis-upto}  (a)\&(b) follow from Theorem \ref{theorem-max-dis} for~$X^{s}$, we first observe that for any $a>1$,
\bq \label{j2}
P\big(M^{s} \geq n\big)  &=& P\big(M^{s}_{an} \geq n\big) +P\big(M^{s}_{an} < n, \ M^{s}\geq n\big) \nonumber  \\
&=& P\big(M^{s}_{an} \geq n\big) +\sum_{ j>  an}(1-p_{0})^{j}P\big(\tau_{n}=j\big).
\eq
By a simple calculation we get
that for $a>-\frac{\log \const }{\log(1-p_{0})} (>1)$,
\[
\const^{n}\sum_{j> an}(1-p_{0})^{j}P\big(\tau_{n}=j\big) \leq \const^{n} \frac{(1-p_{0})^{an}}{p_{0}}\rr 0,   \textrm{ as } n\rr\infty.
\]
It follows from \eqref{eqn:exp_decay_nnbrw} and (\ref{j2}) that
\[
\lim_{n\rr\infty} \const^{n} P\big(M^{s}_{a n}>n\big) = 1 .
\]
Define
\be\label{a:upper_bd}
\ol{a}:=\inf\{a: \lim_{n\rr\infty} \const^{n} P\big(M^{s}_{a n}>n\big) = 1 \} \leq -\frac{\log \const }{\log(1-p_{0})}.
\ee
Observe that the statement above can be equivalently written as
\be\label{M_n:upper}
\lim_{n\rr\infty} \const^{cn} P\big(M^{s}_{n}>cn\big) = 1, \q\mbox{for all } c\in\left(0,\frac{1}{\ol{a}}\right).
\ee

Next we consider part (c) of Theorem \ref{theorem-max-dis-upto}. We want to show
\be\label{a:lower_bd}
\ul{a}:=\sup\{a: \lim_{n\rr\infty} \const^{n} P\big(M^{s}_{a n}>n\big) = 0 \} >1,
\ee
which implies that
\[
\lim_{n\rr\infty} \const^{cn} P\big(M^{s}_{n}>cn\big) = 0, \q\mbox{for all } c>\frac{1}{\ul{a}}.
\]
To prove \eqref{a:lower_bd}, note that
\begin{equation}\label{Man_decomp}
\aligned
P\big(M^{s}_{a n}>n\big)
&= \sum_{n\leq j\leq a n}(1-p_{0})^{j}P(\tau_{n}=j)\\
&=\sum_{n\leq j\leq a n}m^{j}P(\tau_{n}=j).
\endaligned
\end{equation}
Since $\rho=(1+\sqrt{1-m^2})/m<2/m$, we have
\[
\rho^n m^{n}P(\tau_{n}=n) = \left(\rho m /2\right)^n\to 0,
\]
hence it suffices to show that there exists $a>1$ such that
\be\label{pgf_lower_bd_binary}
 \rho^n \sum_{n< j\leq a n} m^{j}P(\tau_{n}=j) \to 0.
\ee
Recall that $K(\theta)=E(\theta^{W_1})$. By the Chernoff bound we have for all $j$ and $\theta \geq 1$,
\[
P(\tau_{n}= j) \leq  P(W_{j}\geq n)
\leq \frac{E\big(\theta^{W_{j}}\big)}{\theta^n}
= \frac{(K(\theta))^j}{\theta^n}.
\]
Therefore,
\be\label{Man_lower_bd_binary}
\aligned
  \rho^n \sum_{n< j\leq a n} m^{j}P(\tau_{n}=j)
 &\leq \sum_{n<j\leq a n} \left(\frac{\rho}{\theta}\right)^n (m K(\theta))^j\\
 &=\sum_{n<j\leq a n} \exp\left(f(j/n,\theta)\cdot n\right),
 \endaligned
\ee
where
\[
f(x,\theta)=\log\rho -\log \theta + x\cdot \log(m K(\theta)),\q\mbox{for all } x> 1 \mbox{ and }\theta \geq 1.
\]
Plugging $K(\theta)=\big(\theta+\theta^{-1}\big)/2$ and differentiating $f(x,\theta)$ with respect to $\theta$ show that for any $x>1$, $f(x,\theta)$ attains its minimum at
\bn
\theta^{*}=\sqrt{\frac{x+1}{x-1}},
\en
with
\bn \label{dd}
f(x,\theta^{*})& =&\log \Big ( \frac{1+\sqrt{1-m^{2}}}{m}\Big)+ x\log (m x)-\frac{x-1}{2}\log(x-1)-\frac{x+1}{2}\log(x+1).
\en
Denote the function above by $g(x)$. It is easy to see that
\[
\lim_{x\to 1+} g(x) = \log \Big (1+\sqrt{1-m^{2}}\Big) - \log(2) <0.
\]
Moreover, by differentiating $g(x)$ we see that $g(x)$ is strictly increasing for $x<1/\sqrt{1-m^2}$ and strictly decreasing
for $x>1/\sqrt{1-m^2}$. It follows that there exists $1<\ul{a} \leq 1/\sqrt{1-m^2} \leq \ol{a}$ such that $g(x) <0$ for all $x<\ul{a}$, and by \eqref{Man_lower_bd_binary} we obtain~\eqref{pgf_lower_bd_binary}.

Finally we prove that $\ul{a}=\ol{a}=1/\sqrt{1-m^2}$, in other words, the function
\be\label{phi_jump}
\phi(a):= \lim_{n\rr\infty} \const^{n} P\big(M^{s}_{a n}>n\big)
 =\left\{\begin{array}{ll}
        0 & \q \mbox{if } a< 1/\sqrt{1-m^2}\\
        1 & \q \mbox{if } a> 1/\sqrt{1-m^2}.\\
        \end{array}
        \right.
\ee
Based on the analysis above, it is sufficient to show that
\[
g( 1/\sqrt{1-m^2})=0,\q\mbox{for all}\q m\in (0,1).
\]
To see this, denote $z=1/\sqrt{1-m^2}$. Then
\[\aligned
&g(z)\\
=&\log\Big(\frac{1+1/z}{m}\Big) + \log(mz) + (z-1)\log(mz) - \frac{z-1}{2}\log(z-1)-\frac{z+1}{2}\log(z+1)\\
=&\log(z+1)+ \frac{z-1}{2}\log\Big(\frac{m^2z^2}{z-1}\Big) -\frac{z+1}{2}\log(z+1)\\
=&\frac{z-1}{2}\log\Big(\frac{m^2z^2}{z^2-1}\Big),
\endaligned
\]
which equals zero by noting that $m^2z^2/(z^2-1)\equiv 1$ for $z=1/\sqrt{1-m^2}$.

The convergence in \eqref{phi_jump} implies that for any $\eps>0$,
\[\aligned
\rho^n P\big( M^{s}_{(1/\sqrt{1-m^2}- \eps) n}\leq n <M^{s}_{(1/\sqrt{1-m^2}+\eps) n}\big)
  \to 1.
\endaligned
\]
In fact, by using the Taylor expansion of $g(x)$ around $1/\sqrt{1-m^2}$ and noting that $g( 1/\sqrt{1-m^2})=g'( 1/\sqrt{1-m^2})=0$
while $g''( 1/\sqrt{1-m^2})<0$,
we can show that there exists  $c>0$ such that with $\eta_{\pm}(n):=
(1/\sqrt{1-m^2}  \pm c\sqrt{\log n/n})\cdot~n$,
\[
 \rho^n P\big( M^{s}_{\eta_{-}(n)}\leq n \leq M^{s}_{\eta_{+}(n)}\big)
 = \rho^n \sum_{j=\eta_{-}(n)}^{\eta_{+}(n)}  m^j P(\tau_n =j )\to 1.
\]
It follows that
\[
\limsup_{n\rr\infty} \rho^n m^{\eta_{+}(n)} P(\tau_n\in [\eta_{-}(n),\eta_{+}(n)])
\leq 1
\leq
\liminf_{n\rr\infty} \rho^n m^{\eta_{-}(n)} P(\tau_n\in [\eta_{-}(n),\eta_{+}(n)]).
\]
Taking logarithms and dividing by $n$ yield
\[
\lim_{n\rr\infty} \frac{\log P(\tau_n\in [\eta_{-}(n),\eta_{+}(n)])}{n} = -\left(\log\rho + 1/\sqrt{1-m^2}\log(m)\right).
\]
Noting  again that $\rho=(1+\sqrt{1-m^2})/m$, we can then rewrite the convergence above as follows:  for any $a>1$,
\be\label{first_passage_time_local_small_dev}
\aligned
\lim_{n\rr\infty}\frac{\log P(\tau_n \in [an \pm c\sqrt{n\, \log n}] )}{n}
&=-\log\left( (a+1)^{(a+1)/2} (a - 1)^{(a-1)/2} a^{-a}\right)\\
&:= -\log(\lambda(a)).
\endaligned
\ee
This can be regarded as a \emph{local small deviation} result for the first passage times~$\tau_n$ (recall that $\tau_n=O_p(n^2)$).
Loosely speaking, it says that $P(\tau_n \in [an \pm c\sqrt{n\,\log n}] )$ decays like $\lambda(a)^{-n}$. Observe that as $a\to 1$, $\lambda(a)\to 2$, so for $a$ close to 1,   $P(\tau_n \in [an \pm c\sqrt{n\,\log n}] )$ decays like $2^{-n}$, which is natural given the exact formula that $P(\tau_n = n)=2^{-n}$.  However, for general $a>1$, the convergence in \eqref{first_passage_time_local_small_dev} seems to be rather surprising.

\section{Existence of $\lim_{n\rr\infty } (-\log u(n)/n)$} \label{sec-proof-max-dis}
In this section we prove that $\lim_{n\rr\infty}(-\log u(n)/n)$ exists and is  positive. This result establishes the exponential decay of $u(n)$ as $n\rr\infty$ and is an important ingredient in the proof of Theorem \ref{theorem-max-dis}.

We start with the convergence in \eqref{lim-C}, which
may be of independent interest for
the study of small deviations of random walk.
In the following Proposition we state this result for a larger class of random walks which even does not require the
random walk to have a mean.

\begin{proposition}  \label{lemma-con-pgf}
Suppose that $(Y_i)_{i=1}^\infty$ is a sequence of \hbox{i.i.d.} (real) random variables. Define $S_n=\sum_{i=1}^n Y_i$ for all $n\in\zz{N}$, and for any $x\in \zz{R}$, let $\tau_{x}^{S}=\min\{n\geq 1:  S_{n}\geq x\}$. Then as long as $P(Y_1>0)>0$, for any $y>0$ and $\gamma\in(0,1)$ we have
\bq\label{eqn:pgf_tau_exp_decay}
f(y):=\lim_{x\rr\infty }\frac{\log E\big( \gamma^{ \tau^{S}_{x y}}\big)}{x}  \ \textrm{exists}.
\eq
Moreover, the limiting function $f(\cdot)$  satisfies that
\bq \label{q0}
f(y)= yf(1),
\eq
and
\bq \label{q00}
f(y)\geq y\log E\big(\gamma^{\tau^{S}_{1}}\big)>-\infty.
\eq
\end{proposition}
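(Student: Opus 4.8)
The plan is to exploit subadditivity. Fix $\gamma\in(0,1)$ and for $x>0$ set $a(x):=-\log E(\gamma^{\tau^S_x})$, which is finite and nonnegative since $\gamma^{\tau^S_x}\in(0,1]$ and $\tau^S_x<\infty$ a.s. (because $P(Y_1>0)>0$ implies $S_n\to\infty$ on an event of positive probability, and in fact a.s.\ $\limsup S_n=+\infty$ by a standard Hewitt--Savage/zero--one argument, so $\tau^S_x<\infty$ a.s.). The key observation is superadditivity of $a$: to reach level $x_1+x_2$ the walk must first reach level $x_1$ at some time $\tau^S_{x_1}=k$, and then, starting from $S_k\geq x_1$, reach $x_1+x_2$, which by the strong Markov property and the i.i.d.\ structure takes stochastically \emph{no more} steps than a fresh copy of $\tau^S_{x_2}$ (since $S_k$ may already exceed $x_1$). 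Hence
\[
E\big(\gamma^{\tau^S_{x_1+x_2}}\big)\;\ge\;E\big(\gamma^{\tau^S_{x_1}}\big)\,E\big(\gamma^{\tau^S_{x_2}}\big),
\]
so $a(x_1+x_2)\le a(x_1)+a(x_2)$, i.e.\ $a$ is subadditive on $(0,\infty)$.

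Next I would restrict to the lattice $\{xy: x\in\zz{N}\}$ for fixed $y>0$: the sequence $n\mapsto a(ny)$ is subadditive in $n$, so by Fekete's lemma $\lim_{n\to\infty} a(ny)/n$ exists in $[0,\infty]$ and equals $\inf_n a(ny)/n$. To upgrade from integer $x$ to real $x\to\infty$, I would use monotonicity: $\tau^S_x$ is nondecreasing in $x$, hence $a(x)$ is nondecreasing, so for $x\in[\lfloor x\rfloor y,(\lfloor x\rfloor+1)y]$ we sandwich $a(\lfloor x\rfloor y)\le a(xy)\le a((\lfloor x\rfloor +1)y)$ and divide by $x$; together with the already-established integer limit this gives that $\lim_{x\to\infty} a(xy)/x$ exists, which is \eqref{eqn:pgf_tau_exp_decay} with $f(y)=-\lim_{x\to\infty}a(xy)/x$. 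The identity \eqref{q0} is then immediate: $f(y)=-\lim_{x\to\infty}a(xy)/x=-y\lim_{x\to\infty}a(xy)/(xy)=yf(1)$, using that $xy\to\infty$ as $x\to\infty$. Finally, for \eqref{q00}, subadditivity gives $a(ny)\le n\,a(y)$, so $f(y)=-\inf_n a(ny)/n\ge -a(y)=\log E(\gamma^{\tau^S_y})$; applying this with $y$ replaced by $1$ and combining with \eqref{q0} yields $f(y)=yf(1)\ge y\log E(\gamma^{\tau^S_1})$, and the latter is $>-\infty$ because $\tau^S_1\ge 1$ is a genuine $\zz{N}$-valued random variable with $P(\tau^S_1=1)=P(Y_1\ge 1)$ possibly small but $E(\gamma^{\tau^S_1})>0$ (it is bounded below by $\gamma^k P(\tau^S_1=k)$ for any $k$ with $P(\tau^S_1=k)>0$, and such $k$ exists).

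The main obstacle I anticipate is making the superadditivity inequality fully rigorous, specifically the claim that conditionally on $\tau^S_{x_1}=k$ and $\mathcal F_k$, the additional time to climb from $S_k$ to $x_1+x_2$ is stochastically dominated by an independent copy of $\tau^S_{x_2}$. This needs a careful application of the strong Markov property at the stopping time $\tau^S_{x_1}$ plus the monotonicity of first-passage times in the starting level (overshoot only helps), i.e.\ $\tau^S_{x_1+x_2}-\tau^S_{x_1}\preceq \tilde\tau^S_{x_2-\mathrm{overshoot}}\preceq \tilde\tau^S_{x_2}$ where $\tilde\tau$ is built from the post-$\tau^S_{x_1}$ increments. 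Since $\gamma<1$, $\gamma^{(\cdot)}$ is decreasing, so stochastic domination in the exponent flips the inequality in the desired direction for the expectations. Everything else — Fekete, monotonicity sandwiching, the a.s.\ finiteness of $\tau^S_x$ — is routine. One should also note the limit $f(y)$ could a priori be $-\infty$ only if $a(xy)/x\to+\infty$; the bound \eqref{q00} rules this out, so $f$ is real-valued throughout.
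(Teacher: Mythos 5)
Your proof is correct and follows essentially the same route as the paper: define $a(x) = -\log E(\gamma^{\tau^S_x})$, establish subadditivity of $a$ (equivalently, superadditivity of $\log E(\gamma^{\tau^S_x})$) via the strong Markov property plus the fact that overshoot only helps, apply Fekete along the lattice $ny$, and upgrade to real $x\to\infty$ via monotonicity of $\tau^S_x$ in $x$. Your derivation of \eqref{q0} is marginally cleaner — a direct change of variables for all $y>0$ rather than the paper's passage through rationals — and your derivation of \eqref{q00} via $\inf_n a(ny)/n \leq a(y)$ matches the paper's $\sup_n \log E(\gamma^{\tau_{ny}})/n \geq \log E(\gamma^{\tau_y})$.

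One small inaccuracy: the parenthetical claim that $\tau^S_x < \infty$ a.s. is false under the stated hypotheses. If $E(Y_1) < 0$ (which is entirely consistent with $P(Y_1>0)>0$), then $S_n\to -\infty$ a.s.\ and $\tau^S_x = \infty$ with positive probability. What you actually need — and what does hold — is only $P(\tau^S_x<\infty)>0$, which follows from $P(Y_1>0)>0$ (there exist $\varepsilon,k$ with $P(S_k\geq k\varepsilon)>0$, and $k\varepsilon\geq x$ for $k$ large). With the convention $\gamma^\infty = 0$, this already gives $E(\gamma^{\tau^S_x})\in(0,1]$, hence $a(x)\in[0,\infty)$, which is all your argument uses. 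So the conclusion you draw from the erroneous claim is still valid; you should simply replace the a.s.\ finiteness assertion by $P(\tau^S_x<\infty)>0$. The rest of the proposal is sound.
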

\begin{proof}
To ease the notation we write $\tau_{\cdot}$ instead of $\tau^{S}_{\cdot}$ throughout the proof.

For each $y>0$, define
\bn
a_{n}=\log E^{}\big( \gamma^{ \tau_{ny}}\big), \ n=1,2,...
\en
We first show that $\{-a_{n}\}_{n\geq 1}$ is a subadditive sequence. In fact, from the strong Markov property we get that for every $k,l>0$,
\bq \label{q1}
E\big(\gamma^{\tau_{(k+l)y}}\big)&=&E\big(\gamma^{\tau_{ky}}E^{S_{\tau_{ky}}}\big(\gamma^{\tau_{(k+l)y}-\tau_{ky}}\big)\big)  \nonumber \\
&\geq&E\big(\gamma^{\tau_{ky}}\big)E\big(\gamma^{\tau_{ly}}\big).
\eq
Hence
\[
\aligned
a_{k}+a_{l}
=&\log \big( E\big( \gamma^{ \tau_{k y}}\big)E\big( \gamma^{ \tau_{l y}}\big)\big)\\
\leq& \log \big( E\big( \gamma^{ \tau_{(k+l)y} }\big)\big)
= a_{k+l},
\endaligned
\]
in other words, $\{-a_{n}\}_{n\geq 1}$ is subadditive. By Fekete's subadditive lemma we then have that $f(y):=\lim_{n\rr\infty}\frac{\log E\big( \gamma^{ \tau_{n y}}\big)}{n}$ exists, moreover
\bq \label{q3}
f(y)=\sup_{n\geq 1}\frac{\log E\big( \gamma^{ \tau_{n y}}\big)}{n}.
\eq
The convergence along the whole sequence $\zz{R}\ni x\to\infty$ in \eqref{eqn:pgf_tau_exp_decay} follows from the monotonicity of $\tau_{y}$ in $y$.

Next we prove (\ref{q0}). Clearly $f(y)$ is decreasing in $y$, hence it suffices to show (\ref{q0}) for all rational numbers.
Let $y = i/j$ where $i,j \in \zz{N}$. From \eqref{eqn:pgf_tau_exp_decay} we get
\be \label{q4}
\aligned
f\Big(\frac{i}{j}\Big)
&=\lim_{n\rr\infty }\frac{\log E\big( \gamma^{ \tau_{ni/j }}\big)}{n}  \\
&=\lim_{n\rr\infty }\frac{\log E\big( \gamma^{ \tau_{nj(i/j) }}\big)}{nj}   \\
&=\frac{i}{j}\lim_{n\rr\infty }\frac{\log E\big( \gamma^{ \tau_{ni }}\big)}{ni}  \\
&=  \frac{i}{j}f(1),
\endaligned
\ee
and we get (\ref{q0}).

Finally, by (\ref{q0}), to prove (\ref{q00}) it suffices to show that
$f(1)\geq \log\big(E\big( \gamma^{\tau_{1}}\big)\big)>-\infty$. The first inequality follows from (\ref{q3}), and the second inequality holds due to that $P(Y_1>0)>0$ which implies  $P(\tau_1<\infty)>0$.
\end{proof}

\begin{remark}  \label{near-n-exmp}
If $S$ is a nearest neighbor random walk, then inequality (\ref{q1}) is an equality and we have
\bn
\frac{\log E\big(\gamma^{ \tau_{n y}}\big)}{n} = y\log E\big(\gamma^{\tau_{1}}\big), \ \textrm{for all }  n\in \zz{N} \mbox{ and }  y>0 \ \textrm{such that } ny\in \zz{N}.
\en
\end{remark}

Now we prove the statement in the section title.

\begin{lemma} \label{lemma-lim-w}
$\lim_{n\rr \infty} (-\log u(n)/n)$ \ \textrm{exists and} {belongs to} $ (0,\infty)$.
\end{lemma}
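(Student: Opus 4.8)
The plan is to combine two facts: (a) $\lim_n\big(-\log u(n)/n\big)$ exists, which I would obtain from Fekete's sub-additive lemma after establishing a supermultiplicativity estimate for (a variant of) $u$; and (b) the limit is strictly positive, which I would obtain from a first-moment estimate together with a Chernoff bound. Finiteness will come for free from (a).

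For (a) the idea is to restart the branching random walk. Under the ``jump, then reproduce'' convention the subtree rooted at any non-root vertex is a fresh copy not of the original walk but of the modified walk that first reproduces and then jumps; write $\check M$ for the maximal displacement of that modified walk and $\check u(n)=P(\check M\ge n)$. On $\{\check M\ge n\}$ let $v^{\star}$ be the first vertex, in breadth-first order, with $X(v^{\star})\ge n$; conditionally on the exploration up to and including $v^{\star}$, the subtree hanging from $v^{\star}$ is an independent copy of the modified walk issued from $X(v^{\star})$, and if its maximal displacement relative to $X(v^{\star})$ reaches $m$, then some descendant of $v^{\star}$ sits at height at least $X(v^{\star})+m\ge n+m$. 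Hence $\check u(n+m)\ge\check u(n)\,\check u(m)$ exactly, and Fekete's lemma gives that $\check\ell:=\lim_n\big(-\log\check u(n)/n\big)=\inf_n\big(-\log\check u(n)/n\big)$ exists; it is finite since $\check u(1)\ge(1-p_0)\,P(W_1\ge1)>0$. To pass from $\check u$ to $u$, note that $P(M\ge n)=P(W_1+\check M\ge n)$ for $n\ge1$, with $W_1$ independent of $\check M$; since $K(\theta)<\infty$ for all $\theta\ge1$ the right tail of $W_1$ decays faster than any exponential, so an elementary two-sided comparison --- bounding $u(n)$ below by $a_{-j}\check u(n+j)$ for a fixed $j$ with $a_{-j}>0$, and above by $\check u(n)+\sum_{k\ge1}a_k\check u(n-k)$ and splitting the last sum according to whether $n-k$ is large --- shows $-\log u(n)=\check\ell\, n+o(n)$. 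Thus $\lim_n\big(-\log u(n)/n\big)$ exists and equals $\check\ell\in[0,\infty)$. I expect this transfer step, together with the bookkeeping around the root's initial jump, to be the fussiest part of the argument.

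For (b) I would use the many-to-one identity $E\big[\#\{v\in\mathcal T:X(v)\ge n\}\big]=\sum_{k\ge0}m^k\,P(W_k\ge n)$, so that Markov's inequality gives $u(n)\le\sum_{k\ge0}m^k\,P(W_k\ge n)$. Fix $\theta_0$ with $1<\theta_0<\const(m^{-1})$, so that $mK(\theta_0)<1$; then the Chernoff bound $P(W_k\ge n)\le K(\theta_0)^k\theta_0^{-n}$ yields $u(n)\le\theta_0^{-n}/\big(1-mK(\theta_0)\big)$, whence $\liminf_n\big(-\log u(n)/n\big)\ge\log\theta_0>0$. Since the limit exists by (a), it equals this $\liminf$, so it lies in $(0,\infty)$, which is the assertion of the lemma.

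A variant that additionally yields \eqref{lim-C} is to sandwich $c\,E\big(m^{\tau_n}\big)\le u(n)\le C\,E\big(m^{\tau_n}\big)$ for constants $0<c\le C<\infty$: the upper bound is the computation above after resumming $\sum_{k\ge0}m^kP(W_k\ge n)\le(1-m)^{-1}E(m^{\tau_n})$, and the lower bound follows from a second-moment (Paley--Zygmund) argument applied to the number of vertices that are the first on their lineage to reach level $n$. Proposition~\ref{lemma-con-pgf} (with $\gamma=m$, $y=1$) then shows directly that $\lim_n\big(-\log u(n)/n\big)$ exists and equals $-f(1)$, with finiteness from $f(1)\ge\log E\big(m^{\tau_1}\big)>-\infty$ and positivity from the Chernoff bound.
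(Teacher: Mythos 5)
Your positivity estimate (the second bullet) is essentially the paper's and is fine: the many-to-one bound $u(n)\le E\big[\#\{v:X(v)\ge n\}\big]=\sum_{k\ge 0} m^k P(W_k\ge n)$ together with a Chernoff bound at any $\theta_0\in(1,\rho(m^{-1}))$ (so that $mK(\theta_0)<1$) gives $u(n)\le \theta_0^{-n}/(1-mK(\theta_0))$, hence $\lim_n(-\log u(n)/n)\ge\log\theta_0>0$.

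The existence part, however, has a genuine gap. The detour through the reproduce-then-jump quantity $\check u$ rests on the claim that the subtree hanging from a non-root vertex is a copy of the \emph{modified} walk. This is not so. In the jump-then-reproduce convention, $X(v)$ is the position at which $v$ is \emph{born} (the post-jump position of $v$'s parent); $v$ itself has yet to jump and then reproduce, exactly as the root does from the origin. Hence the subtree rooted at any vertex $v$, with positions measured relative to $X(v)$, is an independent fresh copy of the \emph{original} process, and the supermultiplicativity $u(k+l)\ge u(k)\,u(l)$ holds directly by restarting at the first $v^\star$ with $X(v^\star)\ge k$ --- this is exactly the paper's one-line argument, and the $\check u$ construction is unnecessary. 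Moreover, the transfer step you then invoke, $P(M\ge n)=P(W_1+\check M\ge n)$ for $n\ge1$, is not an identity: on the event $\{Z_1=0,\,W_1\ge n\}$ the genealogical tree is the root alone and $M=0<n$, while $W_1+\check M\ge n$ since $\check M\ge0$ always. The right-hand side thus exceeds the left by at least $p_0\,P(W_1\ge n)$; this is asymptotically small but nonzero, so the ``two-sided comparison'' would need to absorb it --- considerably more work than the direct subadditivity argument it is meant to replace. (The sandwich of $u(n)$ by $E(m^{\tau_n})$ in your final paragraph is aimed at equation~\eqref{lim-C} rather than at the lemma itself, and the Paley--Zygmund lower bound there is asserted rather than proved, so I would not rely on it for the present statement.)
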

\begin{proof}
Again we will prove that $-\log u(n)$ is subadditive which implies the existence of the limit.
In fact, for any $k,l>0$, by the strong Markov property we have
\bn
u(k+l) &=& P\big(M \geq k+l\big)  \\
&\geq&P\big(M \geq k\big)P\big(M \geq k+l|M \geq k\big) \\
&\geq&P\big(M \geq k\big)P\big(M \geq l\big) \\
&=&u(k)u(l),
\en
and the subadditivity of $-\log u(n)$ follows. By Fekete's lemma again we obtain that
$$
\lim_{n\rr \infty} -\frac{\log u(n)}{n} = \inf_{n\geq 1}\left(-\frac{\log u(n)}{n}\right).
$$
In particular, we have
\bn
\lim_{n\rr \infty} -\frac{\log u(n)}{n}
\leq -\log u(1)
 =-\log P(M\geq 1)
 < \infty.
\en

Next we show that
$\lim_{n\rr\infty}(-\log u(n)/n)>0$.
Recall that $Z_{n}$ stands for the number of particles at generation $n$.
Note that
\be  \label{rn0}
\begin{aligned}
u(n) &= P(M\geq n) \\
&\leq P(Z_{n}>0) +P(M\geq n, Z_{n}=0)   \\
&\leq  m^{n}+ P(M_{n}\geq n).
\end{aligned}
\ee
Moreover,
 \be \label{rn1}
\begin{aligned}
P(M_{n}\geq n) &= \sum_{k=1}^{n}\sum_{i=0}^{\infty}P(Z_{k}=i,M_{k}\geq n, M_{k-1}<n) \\
&\leq  \sum_{k=1}^{n}\sum_{i=0}^{\infty}P(Z_{k}=i)\cdot i P(W_{k}\geq n) \\
&= \sum_{k=1}^{n}m^{k}P(W_{k}\geq n),
\end{aligned}
\ee
where in the second inequality we used the fact that the trajectory of each particle in generation $k$ follows the same law as the random walk $(W_j)_{j\leq k}$.

Recall the $K(\theta)$ was defined in (\ref{pgf}). For any $\theta_0\in (1,\rho(1/m))$,  by the monotonicity of $K(\cdot)$, we have $1<K(\theta_0)<1/m$. Moreover,  by Chernoff bound, 
\[
P(W_{k}\geq n)
\leq \frac{K(\theta_{0})^{k}}{\theta_{0}^{n}}.
\]
It follows from (\ref{rn1}) that
\begin{equation}\label{Mn_exp_tail}
P(M_{n}\geq n) \leq \sum_{k=1}^{n}m^{k} \frac{K(\theta_{0})^{k}}{\theta_{0}^{n}}  \leq \frac{1}{(1-m K(\theta_0)) \theta_{0}^{n}}, 
\end{equation}
which together with (\ref{rn0}) implies
\bn
u(n) \leq m^{n} +\frac{1}{(1-m K(\theta_0)) \theta_{0}^{n}}.
\en
It follows that $\lim_{n\rr\infty}(-\log u(n)/n)>0$.
\end{proof}

\section{A Discrete Feynman-Kac Formula } \label{sec-f-c}
In this section we derive a discrete Feynman-Kac formula for $u(n)$ which is one of the main ingredients in the proof of Theorem \ref{theorem-max-dis}. The derivation of the Feynman-Kac formula uses ideas from Section 2.2 in \cite{LS15}. We first introduce some additional definitions and a sequence of auxiliary lemmas.

Recall that the offspring distribution $F_{B} =\{p_k\}_{k\geq 0}$. Let $f(\cdot)$ be its probability generating function, and
define
\bq \label{2Q-funct1}
Q^{}(s)=1-\sum_{k=0}^{\infty}p^{}_{k}(1-s)^{k}, \  s\in [0,1],
\eq
which is related to $f(\cdot)$ via
\bq \label{f-Q}
f^{}(s)=1-Q^{}(1-s), \ \textrm{for all }  s\in[0,1].
\eq
Also recall that $F_{RW} = \{a_y\}_{y\in \zz{Z}}$. The following lemma gives a convolution equation for $ u(\cdot)$ based on $Q(\cdot)$.

\begin{lemma} \label{2lamma-v-1}
For all $  n\geq 1$,
\[
 u(n)=\sum_{y\in \zz{Z}}a_{y} Q\big(  u(n-y) \big).
\]
\end{lemma}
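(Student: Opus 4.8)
The plan is to condition on the first generation of the branching random walk. Since a single particle starts at the origin, in the first generation it makes one random-walk step to some site $y\in\zz{Z}$ with probability $a_y$, and then independently produces $k$ offspring with probability $p_k$; each offspring then initiates an independent copy of the whole branching random walk started from $y$. The event $\{M\geq n\}$ fails if and only if, for every one of those offspring subtrees, the maximal displacement (of a copy started at $y$) stays strictly below $n$, which by translation invariance has probability $P(M<n-y)=1-u(n-y)$ for each subtree, and these events are conditionally independent given the step $y$ and the offspring count $k$.

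Carrying this out: write
\[
1-u(n)=P(M<n)=\sum_{y\in\zz{Z}}a_y\sum_{k=0}^{\infty}p_k\,\big(1-u(n-y)\big)^{k},
\]
where the inner sum uses the i.i.d.\ structure of the offspring subtrees and the fact that if there are $k$ children each subtree independently avoids level $n-y$. Recognizing the inner sum as $f\big(1-u(n-y)\big)$ with $f$ the offspring p.g.f., and then using the relation \eqref{f-Q}, namely $f(s)=1-Q(1-s)$, gives $f\big(1-u(n-y)\big)=1-Q\big(u(n-y)\big)$. Substituting back,
\[
1-u(n)=\sum_{y\in\zz{Z}}a_y\Big(1-Q\big(u(n-y)\big)\Big)=1-\sum_{y\in\zz{Z}}a_y\,Q\big(u(n-y)\big),
\]
since $\sum_y a_y=1$, and cancelling the $1$'s yields the claimed identity $u(n)=\sum_{y\in\zz{Z}}a_y\,Q\big(u(n-y)\big)$.

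There is essentially one genuine subtlety, which is also where I expect to spend the most care: justifying the interchange of summations and, more importantly, making sure the decomposition is valid for \emph{all} $n\geq 1$ including $y\geq n$. When $y\geq n$ the first step already reaches level $n$, so $M\geq n$ automatically; correspondingly $u(n-y)=P(M\geq n-y)=1$ for $n-y\leq 0$ (the starting particle is already at or above level $n-y$), and one checks $Q(1)=1-\sum_k p_k(1-1)^k=1-p_0\cdot 0^0$; here one must adopt the convention $0^0=1$ (equivalently, $Q(1)=1$, consistent with $f(0)=1-Q(1)$ being false — rather $f(1)=1$, $Q(0)=0$), so that the term $a_yQ(u(n-y))=a_y$ correctly contributes the full probability $a_y$ of having jumped past level $n$ on the first step. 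The only convergence point is that $\sum_y a_y Q(u(n-y))$ converges absolutely, which is immediate from $0\leq Q\leq 1$ on $[0,1]$ and $\sum_y a_y=1$; Fubini/Tonelli then legitimizes writing the probability as an iterated sum. So the main obstacle is purely bookkeeping at the boundary $y\approx n$ and fixing conventions, not any analytic difficulty.
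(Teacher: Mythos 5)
Your proposal is correct and follows the same approach as the paper: condition on the first generation, derive $1-u(n)=\sum_{y}a_y\sum_{k\geq 0}p_k(1-u(n-y))^k$ with the convention $0^0=1$, and recognize the inner sum as $f(1-u(n-y))=1-Q(u(n-y))$. The paper simply cites Proposition 5 of \cite{LS15} and records the key equation; your write-up supplies the same argument with a bit more detail on the boundary case $y\geq n$.
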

\begin{proof}
This is Proposition 5 in \cite{LS15} and is proved by conditioning on the first generation. More specifically,
by conditioning on the first generation, using the definition of our branching system and following the convention that $0^0=1$, we obtain
\bq \label{2rf11}
1- u(n) = \sum_{y\in \zz{Z}}a_{y}\sum_{k=0}^{\infty}  p_{k}(1-  u_{}(n-y))^{k}, \ \textrm{for all }  n\geq 1,
\eq
which is eqn.(10) in \cite{LS15} (there is a typo in eqn.(10) in \cite{LS15}. The summand $k$ should start from 0. The reason is that under the way that we define the branching system, if $Z_1=0$, then  no matter where the initial particle jumps to in the first generation, we always have $M=0$ as is explained in Remark 1 in  \cite{LS15}).
\end{proof}

Next, recall that $F_{B}$ has a finite third moment, hence by the Taylor expansion of~$Q(\cdot)$ at $s=0$ we have
\bq \label{Q-N}
 Q^{}(s) = ms -\frac{1}{2} \wt \sigma^2  s^{2}+O(s^{3}),
\eq
where $\wt  \sigma^2 =  \sigma^2 +m^{2}-m$.
Define
\bq \label{2h-N}
h^{}(s)=ms- Q(s) = \frac{1}{2}\wt \sigma^2  s^{2}+O(s^{3}),
\eq
and
\bq \label{2H-def1}
H^{}(s)=\frac{h^{}(s)}{m s}=\frac{1}{2}\frac{\wt \sigma^2 }{m} s+O(s^{2}).
\eq
Then Lemma \ref{2lamma-v-1} can be rewritten as the following  which is more useful for our purpose.
\begin{lemma}  \label{2lemma-v-id}
For all $  n \geq 1$,
\[
 u(n)=m\sum_{y \in \zz{Z}}a_{y} u(n-y)-\sum_{y \in \zz{Z}}a_{y}h\big( u(n-y)\big).
\]
\end{lemma}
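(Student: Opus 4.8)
This statement is a trivial algebraic rewriting of Lemma \ref{2lamma-v-1}, so the ``proof'' is essentially one line of substitution. The plan is to start from the convolution identity
\[
u(n)=\sum_{y\in\zz{Z}}a_{y}\,Q\big(u(n-y)\big),\qquad n\geq 1,
\]
which is exactly Lemma \ref{2lamma-v-1}, and then substitute the decomposition of $Q$ coming from the definition \eqref{2h-N}. Indeed, \eqref{2h-N} rearranges to $Q(s)=ms-h(s)$ for all $s\in[0,1]$. Plugging this into the summand (note $u(n-y)\in[0,1]$ for every $y$, so the identity $Q(s)=ms-h(s)$ is legitimately applied) gives
\[
u(n)=\sum_{y\in\zz{Z}}a_{y}\Big(m\,u(n-y)-h\big(u(n-y)\big)\Big)
= m\sum_{y\in\zz{Z}}a_{y}\,u(n-y)-\sum_{y\in\zz{Z}}a_{y}\,h\big(u(n-y)\big),
\]
which is the claimed identity. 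Splitting the sum in the last step is justified because $\sum_y a_y u(n-y)\leq \sum_y a_y=1<\infty$ and $\sum_y a_y|h(u(n-y))|\leq \sup_{s\in[0,1]}|h(s)|<\infty$, so both series converge absolutely and the sum may be separated.

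There is essentially no obstacle here; the only thing to double‑check is that $Q(s)=ms-h(s)$ holds on the full range $[0,1]$ and not merely asymptotically near $0$ — but this is immediate from the \emph{definition} \eqref{2h-N} of $h$, with the Taylor expansion in \eqref{Q-N}, \eqref{2h-N} being supplementary information (used later) rather than part of what is needed. Hence the proof is just: invoke Lemma \ref{2lamma-v-1}, substitute $Q(s)=ms-h(s)$, and separate the absolutely convergent sums.
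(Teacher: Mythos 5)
Your argument is correct and is exactly what the paper intends: the paper itself states only that ``Lemma \ref{2lamma-v-1} can be rewritten as'' this identity, i.e.\ that one substitutes $Q(s)=ms-h(s)$ (from the definition \eqref{2h-N}) into the convolution formula of Lemma \ref{2lamma-v-1} and splits the sum. Your additional remarks on absolute convergence and on the range of validity of $Q(s)=ms-h(s)$ are minor but harmless bookkeeping; there is nothing further to add.
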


We will also need the following result on the boundedness and monotonicity of $H$.
\begin{lemma}  \label{lemma-2H-bound}
For all $s\in [0,1]$,
\[
0\leq H^{}(s) \leq \frac{m-1+p_{0}}{m}.
\]
\end{lemma}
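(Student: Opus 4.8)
The plan is to prove the two inequalities $H(s)\geq 0$ and $H(s)\leq (m-1+p_0)/m$ separately, working directly from the definition $H(s)=h(s)/(ms)=1-Q(s)/(ms)$, where $Q(s)=1-\sum_{k\geq 0}p_k(1-s)^k$. Since $m>0$ and $s\in[0,1]$, it suffices to control the ratio $Q(s)/(ms)$, i.e. to show $0\leq Q(s)/(ms)\leq 1$ gives the lower bound and a matching lower estimate on $Q(s)$ gives the upper bound on $H$.

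For the lower bound $H(s)\geq 0$: this is equivalent to $Q(s)\leq ms$ for $s\in[0,1]$. I would write $ms - Q(s) = ms - 1 + \sum_{k\geq 0} p_k(1-s)^k$. Using $\sum_k p_k = 1$ and $\sum_k k p_k = m$, note that $ms-1+\sum_k p_k(1-s)^k = \sum_k p_k\big((1-s)^k - 1 + ks\big)$. Each summand is nonnegative because the function $t\mapsto (1-s)^t$... more cleanly: by convexity of $x\mapsto x^k$ on $[0,1]$ (or by Bernoulli-type reasoning), $(1-s)^k \geq 1-ks$ for $k\geq 1$ and $s\in[0,1]$, and the $k=0$ term contributes $p_0(1-1+0)=0$. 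Hence $h(s)=ms-Q(s)=\sum_k p_k\big((1-s)^k-1+ks\big)\geq 0$, so $H(s)=h(s)/(ms)\geq 0$ for $s\in(0,1]$, and $H(0)=0$ by the expansion \eqref{2H-def1}.

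For the upper bound: I need $h(s)\leq (m-1+p_0)s$, i.e. $ms - Q(s)\leq (m-1+p_0)s$, i.e. $Q(s)\geq (1-p_0)s$. Now $Q(s) = 1-\sum_{k\geq 0}p_k(1-s)^k = \sum_{k\geq 1}p_k\big(1-(1-s)^k\big)$ since the $k=0$ terms cancel. For each $k\geq 1$ and $s\in[0,1]$ one has $1-(1-s)^k\geq s$ (again because $(1-s)^k\leq 1-s$ for $k\geq 1$, $s\in[0,1]$). Therefore $Q(s)\geq s\sum_{k\geq 1}p_k = s(1-p_0)$, which is exactly what is needed; dividing by $ms$ gives $H(s)=1-Q(s)/(ms)\leq 1-(1-p_0)/m = (m-1+p_0)/m$.

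The only mild subtlety — and the single "obstacle", though it is routine — is handling $s=0$, where $H$ is defined by its limiting/Taylor expression rather than the literal ratio; both bounds extend to $s=0$ by continuity since the expansion \eqref{2H-def1} gives $H(0)=0$, and $0\leq (m-1+p_0)/m$ holds because $p_0\geq 1-m$ is not automatic — wait, in fact $m-1+p_0$ could a priori be negative, but it is not: since $m=\sum_k kp_k \leq \sum_{k\geq 1}p_k \cdot(\text{something})$... more directly, $m = \sum_{k\geq 1} kp_k \geq \sum_{k\geq 1}p_k = 1-p_0$, so $m-1+p_0\geq 0$, confirming the bound is a genuine (nonnegative) bound and is consistent with $H\geq 0$. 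So the real content is the two elementary inequalities $(1-s)^k\leq 1-s$ and $(1-s)^k\geq 1-ks$ for $k\geq 1$, $s\in[0,1]$, and careful bookkeeping of the $k=0$ term; I expect no genuine difficulty.
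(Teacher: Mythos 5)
Your proof is correct, and it takes a genuinely different route from the paper. The paper's proof works by first observing $H(0)=0$ and $H(1)=(m-1+p_0)/m$, and then showing $H$ is monotone increasing on $[0,1]$: writing $H(s)=1-(1-f(1-s))/(ms)$ and differentiating, monotonicity reduces to $f'(1-s)\leq (f(1)-f(1-s))/s$, which is the convexity of the generating function $f$. You instead establish the two bounds pointwise by direct termwise estimates on $Q(s)=\sum_{k\geq 1}p_k\bigl(1-(1-s)^k\bigr)$: the lower bound $H\geq 0$ amounts to $Q(s)\leq ms$, which follows from the tangent-line inequality $(1-s)^k\geq 1-ks$; the upper bound amounts to $Q(s)\geq (1-p_0)s$, which follows from the trivial $(1-s)^k\leq 1-s$ for $k\geq 1$. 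Both arguments ultimately lean on convexity, but the paper deploys it once globally (monotonicity of $H$, plus endpoint evaluation) while you deploy two one-sided Bernoulli-type bounds per term. Your route is a bit more elementary and self-contained (no derivative of $H$, no reduction to a secant inequality); the paper's route yields the extra fact that $H$ is monotone, which is stronger than needed here but structurally clean. Your side remark resolving $m-1+p_0\geq 0$ via $m=\sum_{k\geq 1}kp_k\geq \sum_{k\geq 1}p_k=1-p_0$ is the same observation the paper makes, and your treatment of $s=0$ via the expansion \eqref{2H-def1} matches the paper's $H(0)=0$.
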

\begin{proof}
It is easy to verify  that $H(0)=0$ and
\bq \label{H-1}
H^{}(1)= \frac{m-Q(1)}{m}
=  \frac{m-1+p_{0}}{m}.
\eq
Note that $m\geq 1-p_{0}$, hence $H(1)\geq0$. To prove the desired conclusion, it is thus enough to show that $H$
is increasing.

By the definition of $H$ in (\ref{2H-def1}) and (\ref{f-Q})  we have
\bn
H(s)=1-\frac{1-f(1-s)}{ms}.
\en
Differentiating $H(s)$ and using the fact that $f(1)=1$ we get that $H$ is increasing if
\[
f'(1-s) \leq \frac{f(1)-f(1-s)}{s}, \q \textrm{for all }   s\in[0,1].
\]
This follows directly from the fact that $f$ is convex.
\end{proof}

Next, we denote by $\{ \W_{n}\}_{n \geq 0}$ a random walk on $\zz{Z}$ with the following law:
\bn
P(\W_{n+1}-\W_{n}=y\ |\ \W_{n},\W_{n-1},...)=a_{-y}, \q y \in \zz{Z},
\en
in other words, $\{\W_{n}\}_{n \geq 0}$ is a reflection of $W$, the random walk associated with our branching system.

Define the stopping times
\be  \label{tau-y-ex}
\bar \tau_{y} =\min\{k\geq 0: \W_{k}\leq y\}, \q\mbox{for all } y\geq 0, \q\mbox{and}\q \bar \tau:=\bar \tau_{0}.
\ee
Further define for each
$n\geq 0$,
\be \label{2mrt-z}
\aligned
Y^{}_{n}
=&m^{n} u(\W_{n})\mathds{1}_{\{\bar \tau\geq n\}} \prod_{j=1}^{n}\big[1-H^{}( u\big(\W_{j})\big)\big] \\
& +\sum_{i=1}^{n-1}m^{i-1}(1-p_{0})\mathds{1}_{\{\bar \tau=i \}}\prod_{j=1}^{i-1}\big[1-H^{}( u\big(\W_{j})\big)\big],
\endaligned
\ee
where we use the convention that
for any $k\leq 0,$ $\sum_{j=1}^{k}=0$ and $\prod_{j=1}^{k}=1$. In particular, $Y_0=u(\W_0)$.

Finally, let  $\mathcal{F}^{\W}=(\mathcal{F}^{\W}_{n})_{n\geq 0}$ be the natural filtration of $\{\W_{n}\}_{n \geq 0}$.

In the following lemma we prove that $Y^{}=\{Y_{n}\}_{n\geq 0}$ is a martingale.
\begin{lemma}  \label{2lemma-mart-z}
If $\W_0=x\geq 0$, then $Y^{}$ is a martingale with respect to $\mathcal F^{\W}$.
\end{lemma}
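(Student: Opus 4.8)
The plan is to verify the martingale property directly by computing $E\big(Y_{n+1}\mid \mathcal F^{\W}_n\big)$ and checking it equals $Y_n$, conditioning on the value of $\W_{n+1}-\W_n$, i.e.\ on the $(n{+}1)$-st step of the random walk. First I would dispose of integrability: since $0\le u(\cdot)\le 1$, $0\le H(u(\cdot))\le (m-1+p_0)/m<1$ by Lemma \ref{lemma-2H-bound}, and $m<1$, each term in \eqref{2mrt-z} is bounded (the first by $m^n$, the sum by a geometric-type bound), so $Y_n$ is bounded, hence integrable. Then I would split into the two natural cases according to whether the walk has already been absorbed.

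On the event $\{\bar\tau\le n\}$, the indicator $\mathds 1_{\{\bar\tau\ge n\}}$ kills the first term and the sum stabilizes: for $\bar\tau=i\le n$ both $Y_n$ and $Y_{n+1}$ equal $m^{i-1}(1-p_0)\prod_{j=1}^{i-1}[1-H(u(\W_j))]$, which is $\mathcal F^{\W}_n$-measurable, so $E(Y_{n+1}\mid\mathcal F^{\W}_n)=Y_n$ trivially. The substance is on $\{\bar\tau\ge n\}$, equivalently $\{\W_n\ge 1\}$ (recall $\W_0=x\ge 0$; strictly I should also treat $\bar\tau=n$, i.e.\ $\W_n=0$, where again everything is already $\mathcal F^{\W}_n$-measurable). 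On $\{\W_n\ge 1\}$ write $A_n:=m^n\prod_{j=1}^n[1-H(u(\W_j))]$, which is $\mathcal F^{\W}_n$-measurable, so that the first term of $Y_n$ is $A_n\,u(\W_n)$. Conditioning on the step $\W_{n+1}=\W_n+y$ with probability $a_{-y}$: on the sub-event where $\W_{n+1}=\W_n+y\ge 1$ the contribution to $Y_{n+1}$ from the first term is $A_n\cdot m\,[1-H(u(\W_n+y))]\,u(\W_n+y) = A_n\big(m\,u(\W_n+y)-h(u(\W_n+y))\big)$, using the identity $m s[1-H(s)] = ms - h(s)$ from \eqref{2H-def1}; and on the sub-event where $\W_{n+1}=\W_n+y\le 0$ (so $\bar\tau=n+1$), the contribution to $Y_{n+1}$ from the sum is exactly $m^{n}(1-p_0)\prod_{j=1}^{n}[1-H(u(\W_j))] = m^{-1}(1-p_0)A_n$. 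Summing over $y$ with weights $a_{-y}$, i.e.\ over the possible increments, gives
\[
E\big(Y_{n+1}\mid\mathcal F^{\W}_n\big)
= A_n\!\!\sum_{y:\,\W_n+y\ge 1}\!\! a_{-y}\big(m\,u(\W_n+y)-h(u(\W_n+y))\big)
\;+\; m^{-1}(1-p_0)A_n\!\!\sum_{y:\,\W_n+y\le 0}\!\! a_{-y}.
\]

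The final step is to recognize the right-hand side as $A_n\,u(\W_n)=Y_n$. Re-indexing the reflected walk (replace $-y$ by $z$, so $a_{-y}=a_z$ and $\W_n+y$ becomes $\W_n-z$) turns the first sum into $\sum_{z:\,\W_n-z\ge 1} a_z\big(m\,u(\W_n-z)-h(u(\W_n-z))\big)$. For the "killed" part, for any integer $w\le 0$ we have $u(w)=P(M\ge w)=1$, hence $m\,u(w)-h(w)=m-h(1)=m-(1-p_0)\cdot\frac{?}{}$; more cleanly, $h(1)=m-Q(1)=m-1+p_0$ by \eqref{2h-N} and \eqref{H-1}, so $m\,u(w)-h(u(w)) = m-(m-1+p_0) = 1-p_0$, and therefore $m^{-1}(1-p_0)=m^{-1}\big(m\,u(w)-h(u(w))\big)$. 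Thus both sums can be merged under a single sum $\sum_{z\in\zz Z} a_z\big(m\,u(\W_n-z)-h(u(\W_n-z))\big)$ over \emph{all} $z$ (the $\W_n-z\le 0$ terms each contributing $1-p_0$, matching the second sum since $\sum_{z:\W_n-z\le 0}a_z$ of the constant $1-p_0$ equals $(1-p_0)\sum_{z:\W_n-z\le 0}a_z$), and by Lemma \ref{2lemma-v-id} applied at the integer $\W_n\ge 1$ this equals $u(\W_n)$. Hence $E(Y_{n+1}\mid\mathcal F^{\W}_n)=A_n u(\W_n)=Y_n$.

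The main obstacle is bookkeeping at the absorption boundary: making sure the algebra at the step where the walk first drops to $\le 0$ correctly produces the new term $m^{i-1}(1-p_0)\prod_{j=1}^{i-1}[\cdots]$ with the right power of $m$ and the product truncated at $i-1$ rather than $i$ (this is why the factor $m^{-1}$ appears above and cancels one power of $m$ from $A_n$), and handling the edge case $\W_n=0$ consistently with the conventions $\sum_{j=1}^{0}=0$, $\prod_{j=1}^{0}=1$. None of the estimates are delicate once the cases are organized; the content is purely the identity in Lemma \ref{2lemma-v-id} together with $m s[1-H(s)]=ms-h(s)$ and the boundary value $m\,u(w)-h(u(w))=1-p_0$ for $w\le 0$.
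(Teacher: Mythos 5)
Your plan follows the same route as the paper's proof: split by the absorption time $\bar\tau$, use $1-H(s)=1-h(s)/(ms)$, the boundary value $1-H(1)=(1-p_{0})/m$, and the convolution identity of Lemma~\ref{2lemma-v-id} at the interior point $\W_{n}\geq 1$. However, there is a bookkeeping error on the absorbed subevent $\{\W_{n+1}\leq 0,\ \bar\tau=n+1\}$, which if taken literally makes your final merge fail. On that subevent the sum $Y^{2}_{n+1}=\sum_{i=1}^{n}m^{i-1}(1-p_{0})\mathds{1}_{\{\bar\tau=i\}}\prod_{j=1}^{i-1}[\cdots]$ is zero, because its indicators run only over $i\leq n$ while $\bar\tau=n+1$; the contribution you want actually comes from the \emph{first} term $Y^{1}_{n+1}$, whose indicator $\mathds{1}_{\{\bar\tau\geq n+1\}}$ is still equal to $1$ there. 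Computing it: with $u(\W_{n+1})=1$ and $1-H(1)=(1-p_{0})/m$, one gets $Y^{1}_{n+1}=m^{n+1}\cdot 1\cdot\frac{1-p_{0}}{m}\prod_{j=1}^{n}[1-H(u(\W_{j}))]=(1-p_{0})A_{n}$, not $m^{-1}(1-p_{0})A_{n}$; with your $A_{n}=m^{n}\prod_{j=1}^{n}[\cdots]$, the equality $m^{n}(1-p_{0})\prod_{j=1}^{n}[\cdots]=m^{-1}(1-p_{0})A_{n}$ is simply off by a factor of $m$.

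Once you replace $m^{-1}(1-p_{0})A_{n}$ by the correct $(1-p_{0})A_{n}$, the merge closes cleanly: for $\W_{n}-z\leq 0$ the summand $a_{z}\big(m\,u(\W_{n}-z)-h(u(\W_{n}-z))\big)=a_{z}(1-p_{0})$, so both regimes fall under a single sum $A_{n}\sum_{z}a_{z}\big(m\,u(\W_{n}-z)-h(u(\W_{n}-z))\big)$, which equals $A_{n}\,u(\W_{n})=Y_{n}$ by Lemma~\ref{2lemma-v-id}. The rest of your argument (integrability, the already-absorbed cases $\bar\tau\leq n$, the handling of $\bar\tau=n$) is correct and matches the paper's Cases 1--2; the paper's Case 3 computes $E\big(u(\W_{n+1})[1-H(u(\W_{n+1}))]\,\big|\,\mathcal F^{\W}_{n}\big)$ in one stroke without the sub-event split, which is slightly cleaner but is the same computation.
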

\begin{proof}
Define
\[\aligned
Y^{1}_{n}&:= m^{n} u(\W_{n})\mathds{1}_{\{\bar \tau\geq n\}} \prod_{j=1}^{n}\big[1-H^{}( u\big(\W_{j})\big)\big], \q \mbox{and}\\
Y^{2}_{n}&:= \sum_{i=1}^{n-1}m^{i-1}(1-p_{0})\mathds{1}_{\{\bar \tau=i \}}\prod_{j=1}^{i-1}\big[1-H^{}( u\big(\W_{j})\big)\big].
\endaligned
\]
Note that $Y^{2}_{n+1} \in \mathcal{F}_{n}^{\W}$, therefore $Y^{}$ is a martingale iff
\bq \label{c1}
E\big(Y^{1}_{n+1}|\mathcal F_{n}^{\W}\big) &=& Y^{1}_{n}+Y^{2}_{n}-Y^{2}_{n+1} \nonumber \\
&=&Y^{1}_{n}-m^{n-1}(1-p_{0})\mathds{1}_{\{\bar \tau=n \}}\prod_{j=1}^{n-1}\big[1-H^{}( u\big(\W_{j})\big)\big].
\eq
We distinguish among the following cases:

\noindent\textbf{Case 1:} $\bar \tau<n$, then $Y^{1}_{n+1} =Y^{1}_{n}= 0$, and (\ref{c1}) holds trivially.

\noindent\textbf{Case 2:}   $\bar \tau=n$, then $Y^{1}_{n+1} = 0$. From (\ref{H-1}) we get
$1-H(1)=(1-p_{0})/m.$ Hence, noting that $\bar \tau=n$ so that $u(\W_n)=1$ we get
\bn
 Y^{1}_{n} &=&m^{n} u(\W_{n}) \prod_{j=1}^{n}\big[1-H^{}( u\big(\W_{j})\big)\big]\nonumber  \\
&=&m^{n} \cdot1  \cdot \big(1-H(1)\big) \prod_{j=1}^{n-1}\big[1-H^{}( u\big(\W_{j})\big)\big]\nonumber  \\
&=& m^{n-1}(1-p_{0})\prod_{j=1}^{n-1}\big[1-H^{}( u\big(\W_{j})\big)\big],
\en
and (\ref{c1}) follows.

\noindent\textbf{Case 3:}  $\bar \tau\geq n+1$, then $\W_{n}\geq 1$. From the definition of $H$ in (\ref{2H-def1}) and Lemma \ref{2lemma-v-id} we obtain that
\be \label{I}
\aligned
&E\big(Y^{1}_{n+1}|\mathcal F^{\W}_{n}\big) \\
= &m^{n+1}\prod_{j=1}^{n}\big[1-H^{}\big( u(\W_{j})\big)\big]\cdot E\bigg( u(\W_{n+1})\big[1-H^{}\big( u(\W_{n+1})\big)\big] \Big | \mathcal F^{\W}_{n} \bigg)   \\
=&  m^{n+1}\prod_{j=1}^{n}\big[1-H\big( u(\W_{j})\big)\big] \cdot E\bigg( u(\W_{n+1})-\frac{h\big( u(\W_{n+1})\big) }{m} \Big | \mathcal F^{\W}_{n} \bigg)   \\
=& m^{n+1}\prod_{j=1}^{n}\big[1-H^{}\big( u(\W_{j})\big)\big]\cdot \sum_{y\in\zz{Z}}a_{y}\bigg( u(\W_{n}-y)-\frac{h^{}\big( u(\W_{n}-y)\big)}{m} \bigg)  \\
=& m^{n}u({\W}_{n})\prod_{j=1}^{n}\big[1-H\big( u(\W_{j})\big)\big].
\endaligned
\ee
Identity (\ref{c1}) again follows.
\end{proof}

To avoid additional notation, we will also use
$P^{x}, \ E^{x}$
to denote the probability and expectation under the distribution  of $\{\W_{n}\}_{n \geq 0}$ with $\W_{0}=x$, and omit the superscript when $x=0$ (and when there is no confusion).

Finally we  are ready to derive a discrete Feynman-Kac formula for $u(\cdot)$.
\begin{lemma} \label{f-k}
For all $y \geq 0$ and $x\geq y$ we have
\bq \label{rf220}
 u(x)=E^{x}\Big(m^{\bar \tau_{y}} u(\W_{\bar \tau_{y}})\prod_{j=1}^{\bar \tau_{y}  }\big[1-H^{}( u\big(\W_{j})\big)\big] \Big).
\eq
\end{lemma}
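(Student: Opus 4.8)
The plan is to apply the optional stopping theorem to the martingale $Y = \{Y_n\}_{n\geq 0}$ from Lemma \ref{2lemma-mart-z} at the stopping time $\bar\tau_y$, starting from $\W_0 = x \geq y$. Recall that the definition of $Y_n$ in \eqref{2mrt-z} naturally splits into the two pieces $Y^1_n$ and $Y^2_n$; I expect that on the event $\{\bar\tau_y = i\}$ the martingale collapses nicely. First I would record that, because $\W_0 = x \geq y$ and $\W$ only decreases to reach level $y$, the stopping time in Lemma \ref{2lemma-mart-z} (which involves $\bar\tau = \bar\tau_0$) should be replaced by $\bar\tau_y$; so I would first prove the analog of Lemma \ref{2lemma-mart-z} with $\bar\tau$ replaced by $\bar\tau_y$, i.e.\ that the process
\[
 Y^{(y)}_n := m^n u(\W_n)\mathds 1_{\{\bar\tau_y \geq n\}}\prod_{j=1}^n\big[1 - H(u(\W_j))\big] + \sum_{i=1}^{n-1} m^{i-1}(1-p_0)\mathds 1_{\{\bar\tau_y = i\}}\prod_{j=1}^{i-1}\big[1-H(u(\W_j))\big]
\]
is an $\mathcal F^\W$-martingale when $\W_0 = x \geq y$. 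The proof is identical to that of Lemma \ref{2lemma-mart-z}: in Case 3 one needs $\W_n \geq y+1 \geq 1$ so that the recursion from Lemma \ref{2lemma-v-id} applies to $u(\W_n)$, which is exactly where the hypothesis $x \geq y$ is used. Actually, re-examining \eqref{2mrt-z}, the "$1-p_0$" term there comes from $1-H(1) = (1-p_0)/m$ and the fact that $u(\W_{\bar\tau_0}) = u(0) = 1$; for general $y$ one instead has $u(\W_{\bar\tau_y}) = u(\W_{\bar\tau_y})$ with $\W_{\bar\tau_y} \leq y$, and since $u$ need not equal $1$ there one should simply keep the single unified expression $Y^{(y)}_n = m^n u(\W_{n\wedge\bar\tau_y})\prod_{j=1}^{n\wedge\bar\tau_y}[1-H(u(\W_j))]$, i.e.\ the stopped martingale, and verify directly that this is a martingale up to time $\bar\tau_y$ using Case 1 and Case 3 of the proof of Lemma \ref{2lemma-mart-z} only (Case 2 is subsumed since nothing special happens at the boundary).

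With the stopped martingale $\tilde Y_n := m^{n\wedge\bar\tau_y} u(\W_{n\wedge\bar\tau_y})\prod_{j=1}^{n\wedge\bar\tau_y}[1-H(u(\W_j))]$ in hand, the optional stopping identity gives $E^x(\tilde Y_n) = \tilde Y_0 = u(x)$ for every $n$. Now I would let $n \to \infty$. Since $\W$ is a mean-zero random walk, it is recurrent, so $\bar\tau_y < \infty$ $P^x$-a.s.; hence $\tilde Y_n \to m^{\bar\tau_y} u(\W_{\bar\tau_y})\prod_{j=1}^{\bar\tau_y}[1-H(u(\W_j))]$ pointwise a.s. To pass the limit through the expectation I need a domination or uniform integrability argument: by Lemma \ref{lemma-2H-bound}, $0 \leq H(u(\W_j)) \leq (m-1+p_0)/m < 1$, so each factor $1 - H(u(\W_j))$ lies in $(0,1]$; combined with $0 \leq u(\cdot) \leq 1$ and $m \in (0,1)$, this gives $0 \leq \tilde Y_n \leq m^{n\wedge\bar\tau_y} \leq 1$, and moreover $\tilde Y_n$ is eventually constant in $n$ on the a.s.\ event $\{\bar\tau_y < \infty\}$. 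Dominated convergence then yields \eqref{rf220}.

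The main obstacle — though a mild one — is the interchange of limit and expectation, i.e.\ justifying that no mass escapes to infinity as $n \to \infty$; this reduces to recurrence of the mean-zero random walk $\W$ (equivalently $W$) together with the uniform bound $0 \le \tilde Y_n \le 1$ coming from Lemma \ref{lemma-2H-bound}, both of which are already available. A secondary point to handle carefully is the bookkeeping that turns the two-term expression \eqref{2mrt-z} into the clean stopped-martingale form and confirms that $u(\W_{\bar\tau_y})$ is the correct terminal value; here one uses that on $\{\bar\tau_y = i\}$ the second sum in \eqref{2mrt-z} contributes the term $m^{i-1}(1-p_0)\prod_{j=1}^{i-1}[1-H(u(\W_j))]$, which equals $m^i u(\W_i)\prod_{j=1}^{i}[1-H(u(\W_j))]$ precisely when $\W_i = 0$ (via $u(0)=1$ and $1-H(1)=(1-p_0)/m$), so for general $y \geq 1$ where $\W_{\bar\tau_y}$ may be strictly below $y$, the unified stopped form is the right one to work with and matches the right-hand side of \eqref{rf220}.
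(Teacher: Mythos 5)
Your proposal is correct and follows essentially the same route as the paper: apply the Optional Stopping Theorem to the bounded martingale $Y$ of Lemma \ref{2lemma-mart-z} at the a.s.-finite stopping time $\bar\tau_y$, observing that since $\bar\tau_y\leq\bar\tau$ the second sum in $Y_{\bar\tau_y}$ vanishes and only the single-product term in \eqref{rf220} survives. Your stopped-process/dominated-convergence argument is precisely the proof of the Optional Stopping Theorem for a bounded martingale at an a.s.-finite stopping time, so it is the same argument unpacked (minor slip: in your first display you wrote $m^n$ where you mean $m^{n\wedge\bar\tau_y}$, which you fix in the following paragraph).
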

\begin{proof}
Clearly $\bar\tau_{y}\leq \bar \tau$ for every $y\geq0$. Since $m\leq 1$, $Y$ is a bounded martingale, so by the Optional Stopping Theorem we have
\bn
u(x)=Y_{0}=E^{x}\big(Y_{\bar\tau_{y}}\big).
\en
Using again that $\bar \tau_{y}\leq \bar\tau$ it is easy to verify that
\bn
Y_{\bar \tau_{y}} &=&m^{\bar \tau_{y}} u(\W_{ \bar\tau_{y}})\prod_{j=1}^{\bar \tau_{y}}\big[1-H^{}( u\big(\W_{j})\big)\big].
\en
\end{proof}

\section{Proof of Theorem \ref{theorem-max-dis}} \label{sec-proof-theorem-max-dis}

We start with some notation and a few auxiliary lemmas. Recall that $\tau_{y}$ was defined in (\ref{tau-y}).
\begin{lemma}   \label{Lemma-PG-V}
$$
E\big(\theta^{W_{\tau_{1}}}\big) < \infty, \q \textrm{for all } \theta \geq 1.
$$
\end{lemma}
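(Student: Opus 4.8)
Throughout, fix $\theta>1$ (the case $\theta=1$ is trivial, since $\tau_1<\infty$ almost surely: as $W$ has mean zero and is non-degenerate it oscillates, so also $\liminf_n W_n=-\infty$ a.s.\ and $P(W_1<0)>0$; and $K(\theta)<\infty$ by \eqref{pgf-cond}). The plan is to first peel off the last jump of $W$ before $\tau_1$, reducing the claim to a Green's-function-type bound for the excursion of $W$ below level $1$, and then to evaluate that sum by reversing finite paths and invoking the descending ladder structure of $W$.

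For the reduction, I would write $\{\tau_1=n\}=\{W_1\le 0,\dots,W_{n-1}\le 0,\ W_n\ge 1\}$ and condition on $W_1,\dots,W_{n-1}$; on the event $\{\tau_1>n-1\}$ one has $W_{n-1}\le 0$, so $\{W_n\ge 1\}\subseteq\{W_n-W_{n-1}\ge 1\}$ and, using independence of the $n$-th increment and $E(\theta^{W_1};W_1\ge 1)\le K(\theta)$,
\[
E\big(\theta^{W_n};\tau_1=n\big)\;\le\;E\big(\theta^{W_1};W_1\ge 1\big)\cdot E\big(\theta^{W_{n-1}};\tau_1>n-1\big)\;\le\;K(\theta)\,E\big(\theta^{W_{n-1}};\tau_1>n-1\big).
\]
Summing over $n\ge 1$ reduces the lemma to showing that
\[
G\;:=\;\sum_{m\ge 0}E\big(\theta^{W_m};\tau_1>m\big)\;=\;E\Big(\sum_{m=0}^{\tau_1-1}\theta^{W_m}\Big)\;<\;\infty,
\]
since then $E\big(\theta^{W_{\tau_1}}\big)\le K(\theta)\,G<\infty$.

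To handle $G$, I would reverse each finite path $(W_0,\dots,W_m)$: setting $\widehat W_j:=W_m-W_{m-j}$, the increments keep their joint law and $\widehat W_m=W_m$, while $\{\tau_1>m\}=\{W_i\le 0,\ 1\le i\le m\}$ translates into $\{\widehat W_j\ge \widehat W_m,\ 0\le j\le m-1\}$. Hence $E\big(\theta^{W_m};\tau_1>m\big)=E\big(\theta^{W_m};\,W_m=\min_{0\le j\le m}W_j\big)$, and summing over $m$ yields $G=E\big(\sum_{m\in R}\theta^{W_m}\big)$, where $R=\{m\ge 0:\ W_m=\min_{0\le j\le m}W_j\}$. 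Now $R$ coincides with the (almost surely infinite, all finite because $\liminf_n W_n=-\infty$) set of weak descending ladder epochs $0=\sigma_0<\sigma_1<\sigma_2<\cdots$ of $W$, and by the strong Markov property $(W_{\sigma_k})_{k\ge 0}$ is a random walk with i.i.d.\ non-positive steps distributed as $W_{\sigma_1}=:-L$, $L\ge 0$. Therefore
\[
G\;=\;\sum_{k\ge 0}E\big(\theta^{W_{\sigma_k}}\big)\;=\;\sum_{k\ge 0}\big(E(\theta^{-L})\big)^k,
\]
and since $P(L>0)\ge P(W_1<0)>0$ forces $E(\theta^{-L})\in(0,1)$ for $\theta>1$, we get $G=\big(1-E(\theta^{-L})\big)^{-1}<\infty$, which finishes the argument.

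The only mildly delicate points will be the path-reversal bookkeeping (checking that the constraint $\{W_i\le 0,\ 1\le i\le m\}$ corresponds exactly to ``$W_m$ is a weak running minimum of $(W_0,\dots,W_m)$'') and the identification of $R$ with the weak descending ladder epochs together with the i.i.d.\ structure of the ladder-height increments; both are routine facts from fluctuation theory, so I anticipate no genuine obstacle.
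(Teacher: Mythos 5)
Your proof is correct, but it takes a genuinely different route from the paper's. The paper decomposes $P(W_{\tau_1}=k)$ by conditioning on the position immediately before the overshoot, expresses this via the Green's function $G_{-}(0,-i)$ of the walk killed upon entering $[1,\infty)$, and then \emph{cites} Spitzer (Propositions 18.8 and 19.3) for the uniform bound $G_{-}(0,-i)\leq A$, which yields $P(W_{\tau_1}=k)\leq A\,P(W_1\geq k)$ and hence the claim via \eqref{pgf-cond}. You instead peel off the last jump to reduce the problem to bounding the $\theta$-weighted occupation measure $G=E\bigl(\sum_{m=0}^{\tau_1-1}\theta^{W_m}\bigr)$, and then compute $G$ explicitly by time reversal and the weak descending ladder decomposition: $G=\sum_{k\geq0}\bigl(E(\theta^{-L})\bigr)^k$, which is a finite geometric series once $\theta>1$ because $P(L>0)\geq P(W_1<0)>0$ (guaranteed by the zero-mean, non-degenerate assumption). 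What your approach buys is self-containedness --- the one external ingredient (iid-ness of weak descending ladder increments, via the strong Markov property) is routine and easily justified in-line, whereas the Spitzer Green's function estimate is a black box --- at the cost of slightly more bookkeeping with path reversal and the identification of $R=\{m: W_m=\min_{0\le j\le m}W_j\}$ with the ladder epoch set. Both arguments are ultimately fluctuation-theoretic, and your computation essentially re-derives, in a $\theta$-weighted form, the finiteness that Spitzer's uniform Green's function bound encodes. All steps in your argument check out, including the case $\theta=1$ handled separately and the geometric ratio being strictly less than one.
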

\begin{proof}
For any $k\geq 1$, we have
\be \label{Sp1}
P(W_{\tau_{1}}=k) = \sum_{i =0}^{\infty} G_{-}(0,-i)\cdot p_{k+i},
\ee
where
\bn
G_{-}(0,-i) = \sum_{n=0}^{\infty}P(W_{n}=-i, \ n <\tau_{1}),
\en
denotes the Green's function inside the half line $(-\infty,0]$. By proposition 18.8 (p. 203) and Proposition 19.3 (p. 209) in \cite{spitzer}, there exists $A>0$ such that $G_{-}(0,-i)<A$ for all $i\geq 0$ (see also the proof of Proposition 19.4 in \cite{spitzer}). It follows from (\ref{Sp1}) that
\be \label{P-V}
P(W_{\tau_{1}}=k) \leq A\sum_{i=0}^{\infty}p_{k+i} =AP(W_{1}\geq k), \q \textrm{for all } k\geq0.
\ee
The conclusion then follows from \eqref{pgf-cond}.
\end{proof}

Denote $\xi_{n}(c)= e^{-e^{-c n}}m$ for $n\geq 1$ and $c>0$. Also define for any $s\in(0,1)$ and $k=0,1,...$,
\bq \label{p-k}
 p_{k}(s)= E\big(s^{ \tau_{1}}\indic_{\{W_{ \tau_{1}}=1+k\}}\big).
 \eq
{Recall that $\ell(n)= \const\big(m^{-1}\big)^{n}u(n)$.}  The following lemma gives recursive bounds on~$\ell(n)$.
\begin{lemma} \label{lemma-lineq}
For all $n\geq 0$,
\begin{equation}\label{ln_ma_upper}
\ell(n+1) \leq \sum_{k=0}^{{\infty}}  \const\big(m^{-1}\big)^{k+1} p_{k}(m)\cdot\ell(n-k).
\end{equation}
Moreover, there exist $c>0$ and $N_{0}>0$ such that for all $n>N_{0}$,
\be\label{ln_ma_lower}
\ell(n+1)\geq \sum_{k=0}^{\lfloor n /2 \rfloor}  \const\big(m^{-1}\big)^{k+1} p_{k}(\xi_{n}(c))\cdot\ell(n-k).
\ee
Furthermore, $w_{k}:= \const\big(m^{-1}\big)^{k+1} p_{k}(m)$ satisfy that
\be\label{sum-w}
 \sum_{k=0}^{{\infty}}  w_k = 1.
\ee
\end{lemma}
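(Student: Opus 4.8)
The plan is to prove all four statements by conditioning on the behavior of the reflected random walk $\W$ during the first excursion below the starting height, using the Feynman-Kac formula of Lemma \ref{f-k}, and then translating everything back into bounds on $\ell(\cdot)$. First I would apply Lemma \ref{f-k} with $y = x-1$ (so $\bar\tau_{x-1}$ is the first time $\W$ drops by at least one), which by the spatial homogeneity of the random walk and the reflection relating $\W$ to $W$ reads $u(x) = E^{x}\big(m^{\bar\tau_{x-1}} u(\W_{\bar\tau_{x-1}})\prod_{j=1}^{\bar\tau_{x-1}}[1-H(u(\W_j))]\big)$. Since $\W$ is a reflection of $W$, $\bar\tau_{x-1}$ under $P^{x}$ has the same law as $\tau_1$ under $P^{0}$ for $W$ (up to the reflection of the increments), and $\W_{\bar\tau_{x-1}} = x - 1 - k$ precisely when the overshoot $W_{\tau_1}-1 = k$. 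Thus, writing $x = n+1$ and dropping the product factor (which is $\le 1$ by Lemma \ref{lemma-2H-bound}, since $0\le H\le (m-1+p_0)/m \le 1$) gives
\[
u(n+1) \;\le\; \sum_{k=0}^{\infty} E\big(m^{\tau_1}\indic_{\{W_{\tau_1}=1+k\}}\big)\, u(n-k) \;=\; \sum_{k=0}^{\infty} p_k(m)\, u(n-k),
\]
where we also need $u(\cdot)$ to be defined at nonpositive integers; since $M\ge 0$ always, $u(j)=1$ for $j\le 0$, and one checks the indices stay consistent. Multiplying through by $\rho(m^{-1})^{n+1}$ and recognizing $\rho(m^{-1})^{k+1}p_k(m) = w_k$ yields \eqref{ln_ma_upper}.

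For the lower bound \eqref{ln_ma_lower}, I would restrict the expectation in the Feynman-Kac formula to the event that the first-passage time $\bar\tau_{x-1}$ is at most $\lfloor n/2\rfloor$ (which also forces the overshoot $k \le \lfloor n/2\rfloor$ so that $u(\W_{\bar\tau_{x-1}}) = u(n-k)$ with $n-k \ge \lceil n/2\rceil \ge 1$, keeping us in the regime where $u$ is the genuine tail). On this event the product $\prod_{j=1}^{\bar\tau_{x-1}}[1-H(u(\W_j))]$ must be bounded below. Here I would use that $u(m)\to 0$ as its argument $\to\infty$ together with the local expansion $H(s) = \frac12\frac{\wt\sigma^2}{m}s + O(s^2)$ from \eqref{2H-def1}: on the event $\bar\tau_{x-1}\le \lfloor n/2\rfloor$ every $\W_j$ visited lies at height $\ge x - \lfloor n/2\rfloor \ge \lceil n/2\rceil + 1$, so $u(\W_j) \le u(\lceil n/2\rceil)$, which by Lemma \ref{lemma-lim-w} is exponentially small, say $\le C e^{-\beta n}$. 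Hence $1 - H(u(\W_j)) \ge 1 - C' e^{-\beta n} \ge e^{-2C' e^{-\beta n}}$ for $n$ large, and raising to the power $\bar\tau_{x-1}\le n/2$ gives $\prod_{j=1}^{\bar\tau_{x-1}}[1-H(u(\W_j))] \ge e^{-C'' n e^{-\beta n}} \ge e^{-e^{-cn}}$ for a suitable $c>0$ and all $n$ large. Factoring this constant-per-step bound into $m$, i.e. replacing $m^{\bar\tau_{x-1}}$ by $(e^{-e^{-cn}}m)^{\bar\tau_{x-1}} = \xi_n(c)^{\bar\tau_{x-1}}$, we obtain for $n > N_0$
\[
u(n+1) \;\ge\; \sum_{k=0}^{\lfloor n/2\rfloor} E\big(\xi_n(c)^{\tau_1}\indic_{\{W_{\tau_1}=1+k\}}\big)\, u(n-k) \;=\; \sum_{k=0}^{\lfloor n/2\rfloor} p_k(\xi_n(c))\, u(n-k);
\]
multiplying by $\rho(m^{-1})^{n+1}$ gives \eqref{ln_ma_lower} after noting $\rho(m^{-1})^{k+1} \cdot$ (the conditional expectation) times $u(n-k)$ reassembles into $\rho(m^{-1})^{k+1} p_k(\xi_n(c))\cdot\ell(n-k)$.

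Finally, for \eqref{sum-w}: by definition $\sum_k w_k = \rho(m^{-1})\sum_k \rho(m^{-1})^{k} E(m^{\tau_1}\indic_{\{W_{\tau_1}=1+k\}}) = \rho(m^{-1}) \, E\big(m^{\tau_1}\rho(m^{-1})^{W_{\tau_1}-1}\big)$. Write $\rho = \rho(m^{-1})$, so that $K(\rho) = m^{-1}$, i.e. $m K(\rho) = 1$. Consider the process $\rho^{W_n} m^{n}$; since $E(\rho^{W_{n+1}-W_n}) = K(\rho) = m^{-1}$, the process $M_n := \rho^{W_n}(mK(\rho))^{n} = \rho^{W_n} m^{n} K(\rho)^n$ — wait, more cleanly: $\rho^{W_n} m^n$ has $E(\rho^{W_{n+1}}m^{n+1}\mid W_n) = \rho^{W_n} m^{n+1} K(\rho) = \rho^{W_n} m^n$, so $(\rho^{W_n} m^n)_{n\ge 0}$ is a martingale. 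Applying the Optional Stopping Theorem at the bounded-increment stopping time $\tau_1$, and justifying the limit exchange via uniform integrability (using Lemma \ref{Lemma-PG-V}, which guarantees $E(\theta^{W_{\tau_1}})<\infty$ for all $\theta\ge 1$, in particular for $\theta = \rho$, so $\rho^{W_{\tau_1}} m^{\tau_1} \le \rho^{W_{\tau_1}}$ is integrable, plus $\tau_1<\infty$ a.s. since $E W_1 = 0$ implies $W$ is recurrent), we get $1 = \rho^{W_0} m^0 = E(\rho^{W_{\tau_1}} m^{\tau_1})$. Therefore $\sum_k w_k = \rho\cdot E(m^{\tau_1}\rho^{W_{\tau_1}-1}) = E(m^{\tau_1}\rho^{W_{\tau_1}}) = 1$, which is \eqref{sum-w}.

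The main obstacle is the lower bound \eqref{ln_ma_lower}: one must simultaneously control the product of $[1-H(u(\W_j))]$ factors (showing it is $1 - o(1)$ at a precise exponential rate, which is where the exotic factor $\xi_n(c) = e^{-e^{-cn}}m$ and hence the $\sqrt{\log n /n}$-type phenomena ultimately originate) and ensure the overshoot and first-passage time are small enough that the argument of $u$ stays in the genuine tail regime $\{j\ge 1\}$ where the exponential decay from Lemma \ref{lemma-lim-w} applies. The uniform-integrability justification for Optional Stopping in \eqref{sum-w} is routine given Lemma \ref{Lemma-PG-V}, but should be spelled out.
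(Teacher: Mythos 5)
Your overall strategy matches the paper's: invoke the Feynman--Kac formula (Lemma \ref{f-k}) with $y=n$ and $x=n+1$, translate $\bar\tau_n$ under $P^{n+1}$ into $\tau_1$ under $P^0$ for $W$, drop the product for the upper bound, control it for the lower bound, and use the martingale $\{m^n\const(m^{-1})^{W_n}\}$ with optional stopping for \eqref{sum-w}. The upper bound and \eqref{sum-w} are correct and essentially the paper's argument.

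There is, however, a genuine gap in your justification of the lower bound \eqref{ln_ma_lower}. You claim to ``restrict the expectation $\ldots$ to the event that the first-passage time $\bar\tau_{x-1}$ is at most $\lfloor n/2\rfloor$'' and assert that this ``also forces the overshoot $k\le\lfloor n/2\rfloor$.'' That implication is false: the jump distribution $F_{RW}$ is not assumed to have a finite left range (for $\W$, equivalently finite right range for $W$), so $\W$ can overshoot below $n$ by an arbitrary amount in a single step, regardless of how small $\bar\tau_{x-1}$ is. Consequently your claim that ``on the event $\bar\tau_{x-1}\le\lfloor n/2\rfloor$ every $\W_j$ visited lies at height $\ge x-\lfloor n/2\rfloor$'' is wrong: it is true for $j<\bar\tau_{x-1}$ (where $\W_j\ge x$), but at $j=\bar\tau_{x-1}$ the walk can be arbitrarily far below zero, and then the factor $1-H(u(\W_{\bar\tau_{x-1}}))$ is not close to $1$. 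What the paper actually restricts on is the event $\{\W_{\bar\tau_n}>\lfloor n/2\rfloor\}$, i.e.\ the \emph{overshoot} $k\le\lfloor n/2\rfloor$; this is exactly the indicator $\indic_{\{W_{\tau_1}=1+k\}}$ with $k\le\lfloor n/2\rfloor$ appearing in your final displayed inequality, so your conclusion silently uses the correct event while your justification invokes a different, unhelpful one.

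Relatedly, the intermediate step where you bound the \emph{whole product} by $e^{-e^{-cn}}$ via ``raising to the power $\bar\tau_{x-1}\le n/2$'' does not feed into what you need. To replace $m^{\bar\tau}$ by $\xi_n(c)^{\bar\tau}=(e^{-e^{-cn}}m)^{\bar\tau}$, you must have the \emph{per-factor} bound $1-H(u(\W_j))\ge e^{-e^{-cn}}$ for every $j\le\bar\tau$ (which you actually derived in the preceding sentence and which, combined with the overshoot restriction, requires no bound on $\bar\tau$ at all); a bound on the product as a single constant $\ge e^{-e^{-cn}}$ would only give $m^{\bar\tau}e^{-e^{-cn}}$, not $\xi_n(c)^{\bar\tau}$. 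So the restriction on $\bar\tau_{x-1}$ is both logically insufficient (it does not control the overshoot) and unnecessary (the per-factor absorption handles arbitrary $\bar\tau$). Replace the restriction on $\bar\tau_{x-1}$ with the restriction $\W_{\bar\tau_n}>\lfloor n/2\rfloor$ and go directly from the per-factor bound to $\xi_n(c)^{\bar\tau_n}$, as in the paper's chain of inequalities, and the proof is sound.
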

\begin{proof}
From Lemmas \ref{f-k} and \ref{lemma-2H-bound} we get that for every $n\geq 0$,
\bq \label{r2-2}
 u(n+1)&=&E^{n+1}\Big(m^{\bar \tau_{n}} u(\W_{\bar \tau_{n}})\prod_{j=1}^{\bar \tau_{n}  }\big[1-H^{}( u\big(\W_{j})\big)\big] \Big)\\
 &\leq & E^{n+1}\Big(m^{\bar \tau_{n}} u(\W_{\bar \tau_{n}}) \Big).  \nonumber
\eq
Decomposing  $E^{n+1}\Big(m^{\bar \tau_{n}} u(\W_{\bar \tau_{n}}) \Big)$ as
\[
  \sum_{k=0}^{{\infty}} E^{n+1}\Big(m^{\bar \tau_{n}}\indic_{\{\W_{\bar \tau_{n}}=n-k\}}\Big)u(n-k)
  =  \sum_{k=0}^{{\infty}} E\Big(m^{\tau_{1}}\indic_{\{W_{\tau_{1}}=1+k\}}\Big)u(n-k),
\]
we  see that \eqref{ln_ma_upper} holds.

To prove the lower bound \eqref{ln_ma_lower}, note that there exists $C_1>0$ such that
\bq \label{aa1}
\log(1-x)\geq - C_1 x, \ \textrm{for all }  x\in[0,1/2].
\eq
Moreover, by our assumptions $F_B$ has a finite third moment, hence there exists $C_2>0$ such that error term in (\ref{2H-def1}) is bounded by $C_2 s^2$ for all $s\in[0,1]$, and so there exists $C_3>0$ such that
\bq \label{aa2}
H(s) \leq C_3 s, \ \textrm{for all }  s\in[0,1].
\eq
Furthermore, by the monotonicity of $u(\cdot)$ we get that
for all $n$ large enough, if $\W_{\bar\tau_{n}}> \lfloor n /2\rfloor$, then
\bq \label{aa3}
u\big(\W_{j}\big) \leq u(\lfloor n /2\rfloor )
\leq  1/(2C_3),
\ \textrm{for all }  j\leq \bar \tau_{n},
\eq
and therefore by (\ref{aa1})--(\ref{aa3}),
\be \label{hg111}
\aligned
\prod_{j=1}^{\bar \tau_{n}  }\big[1-H^{}( u\big(\W_{j})\big)\big] &\geq
\prod_{j=1}^{\bar \tau_{n} }\big[1- C_3\ u\big(\W_{j})\big] \\
&\geq \exp\Big\{-C\sum_{j=1}^{\bar \tau_{n}}  u\big(\W_{j})\Big\},
\endaligned
\ee
where $C=C_1 C_3$. It follows from (\ref{r2-2}) and (\ref{hg111})   that
\[
\aligned
u(n+1)
\geq & E^{n+1}\Big(m^{\bar\tau_{n}}u(\W_{\bar \tau_{n}}) \exp \Big \{-  C \sum_{j=1}^{\bar\tau_{n}} u(\W_{j}) \Big\}\ {\mathds{1}_{\{\W_{\bar\tau_{n}}> \lfloor n /2\rfloor }\}}\Big).
\endaligned
\]
Moreover, by Lemma \ref{lemma-lim-w},  there exists $\bar c>0$ such that for all $n$ large enough,
\[
u(n/2) \leq e^{-\bar cn}.
\]
Therefore, there exists $c>0$ such that for all $n$ large enough,
 \bq
\ell(n+1)
&\geq & \const \big(m^{-1}\big)^{n+1} E^{n+1}\Big(m^{\bar\tau_{n}}u(\W_{\bar \tau_{n}}) \exp \Big \{- C \sum_{j=1}^{\bar\tau_{n}} u(\W_{j}) \Big\} {\mathds{1}_{\{\W_{\bar\tau_{n}}> \lfloor n /2 \rfloor}\}}\Big)   \nonumber \\
&\geq & \const \big(m^{-1}\big)^{n+1} \sum_{k=0}^{\lfloor n/2 \rfloor }E^{n+1}\Big(m^{\bar\tau_{n}}u(\W_{\bar \tau_{n}}) \exp \Big \{- C \sum_{j=1}^{\bar\tau_{n}} u(\W_{j}) \Big\}\indic_{\{\mathcal W_{\bar \tau_{n}}=n-k\}}\Big)  \nonumber  \\
&\geq & \const \big(m^{-1}\big)^{n+1} \sum_{k=0}^{\lfloor n /2 \rfloor}E^{n+1}\Big(m^{\bar\tau_{n}}u(\W_{\bar \tau_{n}}) \exp \Big \{ - C u(\lfloor n /2 \rfloor)\bar\tau_{n} \Big\}\indic_{\{\mathcal W_{\bar \tau_{n}}=n-k\}}\Big)  \nonumber  \\
&\geq &\sum_{k=0}^{\lfloor n /2 \rfloor}  \const \big(m^{-1}\big)^{n+1} E^{n+1}\Big((\xi_{n}(c))^{\bar \tau_{n}}\indic_{\{\mathcal W_{\bar \tau_{n}}=n-k\}}\Big)u(n-k) \nonumber \\
&=& \sum_{k=0}^{\lfloor n /2 \rfloor}  \const \big(m^{-1}\big)^{k+1} E\Big((\xi_{n}(c))^{ \tau_{1}}\indic_{\{W_{ \tau_{1}}=1+k\}}\Big)\ell(n-k) \nonumber \\
&=&\sum_{k=0}^{\lfloor n /2 \rfloor}  \const \big(m^{-1}\big)^{k+1} p_{k}(\xi_{n}(c))\cdot \ell(n-k). \nonumber
 \eq

Finally, the proof of (\ref{sum-w}) uses ideas from Proposition 3.1(i) in \cite{Vidmar14}. By the definition of $\rho(m^{-1})$, $\big\{m^n\const(m^{-1})^{W_{n}}\big\}_{n\geq 0}$ is a martingale. Hence, for any $N\in \mathbb N$, by the  Optional Stopping Theorem, we have $E\big(m^{\tau_{1}\wedge N}\const(m^{-1})^{W_{\tau_{1}\wedge N}}\big)=1$.  Lemma \ref{Lemma-PG-V} and the Dominated Convergence Theorem then imply that $E\big(m^{\tau_{1}}\const(m^{-1})^{W_{\tau_{1}}}\big)=1$, which is (\ref{sum-w}).
\end{proof}

In the rest of this section, we assume that $c$ is fixed as in Lemma \ref{lemma-lineq}, and we suppress the dependence of $\xi_{n}=\xi_{n}(c)$ in $c$.
In the following lemma we prove that $\ell(\cdot)$ is bounded from both above and below.
\begin{lemma} \label{lemma-ln-bnd}
$
0<\liminf_{n\rr\infty }\ell(n)\leq  \sup_{n}\ell(n) \leq 1.
$
\end{lemma}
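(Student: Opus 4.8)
The plan is to trap $\ell(\cdot)$ between the two recursions of Lemma \ref{lemma-lineq}. For the upper bound I would argue by induction. Put $w_k:=\const(m^{-1})^{k+1}p_k(m)$, so $\sum_{k\ge 0}w_k=1$ by \eqref{sum-w}, and observe that $\ell(j)=\const(m^{-1})^{j}u(j)=\const(m^{-1})^{j}\le 1$ for every $j\le 0$, since $u(j)=P(M\ge j)=1$ and $\const(m^{-1})>1$. If $\ell(j)\le 1$ for all $j\le n$, then \eqref{ln_ma_upper} gives $\ell(n+1)\le\sum_{k\ge 0}w_k\,\ell(n-k)\le\sum_{k\ge 0}w_k=1$ (every index $n-k\le n$), so by induction $\sup_n\ell(n)\le 1$.

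The real work is $\liminf_n\ell(n)>0$, and the key is that the coefficients in the lower recursion \eqref{ln_ma_lower} sum to $1$ up to a \emph{summable} error. Write $\xi_n=\xi_n(c)=e^{-e^{-cn}}m$ and $\sigma_n:=\sum_{k=0}^{\lfloor n/2\rfloor}\const(m^{-1})^{k+1}p_k(\xi_n)$, so \eqref{ln_ma_lower} reads $\ell(n+1)\ge\sum_{k=0}^{\lfloor n/2\rfloor}\const(m^{-1})^{k+1}p_k(\xi_n)\,\ell(n-k)$ for $n>N_0$. Using $\sum_k w_k=1$ I would split
\[
1-\sigma_n=\sum_{k=0}^{\lfloor n/2\rfloor}\big(w_k-\const(m^{-1})^{k+1}p_k(\xi_n)\big)+\sum_{k>\lfloor n/2\rfloor}w_k .
\]
For the first sum, \eqref{p-k} and $\xi_n/m=e^{-e^{-cn}}$ give $w_k-\const(m^{-1})^{k+1}p_k(\xi_n)=\const(m^{-1})^{k+1}E\big(m^{\tau_1}\big(1-e^{-\tau_1 e^{-cn}}\big)\indic_{\{W_{\tau_1}=1+k\}}\big)$, which by $1-e^{-x}\le x$ is at most $e^{-cn}\const(m^{-1})^{k+1}E\big(\tau_1 m^{\tau_1}\indic_{\{W_{\tau_1}=1+k\}}\big)$; summing over $k$ bounds it by $e^{-cn}E\big(\tau_1 m^{\tau_1}\const(m^{-1})^{W_{\tau_1}}\big)$, which is finite because $t\mapsto t\,m^{t}$ is bounded on $[0,\infty)$ and $E\big(\const(m^{-1})^{W_{\tau_1}}\big)<\infty$ by Lemma \ref{Lemma-PG-V}. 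For the second sum, Lemma \ref{Lemma-PG-V} also makes $\sum_k 2^{k}w_k<\infty$, so $\sum_{k>\lfloor n/2\rfloor}w_k\le 2^{-\lfloor n/2\rfloor}\sum_k 2^k w_k$ decays exponentially. Combining, $\sigma_n\ge 1-Ce^{-c'n}=:\gamma_n$ with $\sum_n(1-\gamma_n)<\infty$.

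To conclude I would propagate a positive lower bound forward. Since the coefficients in \eqref{ln_ma_lower} are nonnegative and the indices $n-k$ for $0\le k\le\lfloor n/2\rfloor$ cover $\{\lceil n/2\rceil,\dots,n\}$, for $n>N_0$ we get $\ell(n+1)\ge\gamma_n\min_{\lceil n/2\rceil\le j\le n}\ell(j)$. As $\sum_n(1-\gamma_n)<\infty$, the product $\prod_{i\ge N}\gamma_i$ is positive for $N$ large. Fix $N_1>N_0$ large enough that $\gamma_n\in(0,1]$ for all $n\ge N_1$, and recall $\ell(j)>0$ for every $j$ (e.g.\ $u(n)\ge u(1)^n>0$). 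Let $\delta:=\min_{N_1\le j\le 2N_1}\ell(j)>0$, and define $a_n=\delta$ for $N_1\le n\le 2N_1$ and $a_{n+1}=\gamma_n a_n$ for $n\ge 2N_1$; then $(a_n)_{n\ge N_1}$ is nonincreasing with $a_n\ge a_*:=\delta\prod_{i\ge 2N_1}\gamma_i>0$. A strong induction gives $\ell(n)\ge a_n$ for all $n\ge N_1$: for $N_1\le n\le 2N_1$ this is the definition of $\delta$, and for $n\ge 2N_1$ we have $\lceil n/2\rceil\ge N_1$, so by the inductive hypothesis on $[N_1,n]$ and monotonicity of $(a_j)$, $\ell(n+1)\ge\gamma_n\min_{\lceil n/2\rceil\le j\le n}\ell(j)\ge\gamma_n\min_{\lceil n/2\rceil\le j\le n}a_j=\gamma_n a_n=a_{n+1}$. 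Hence $\liminf_n\ell(n)\ge a_*>0$, which together with the upper bound finishes the proof.

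The step I expect to be the main obstacle is precisely making the error $1-\sigma_n$ \emph{summable} rather than merely $o(1)$: a crude bound of the form $\ell(n+1)\ge(1-\varepsilon)\min(\cdots)$ with a fixed $\varepsilon>0$ would be useless, because iterating it sends the lower bound to $0$. Controlling the per-step loss by a factor $\gamma_n$ with $\prod_n\gamma_n>0$ is what forces us to use both the doubly-exponential gap between $\xi_n(c)$ and $m$ that is built into Lemma \ref{lemma-lineq} and the super-exponential decay of the weights $w_k$, and both of these ultimately rest on the finiteness of all exponential moments of $W_{\tau_1}$ from Lemma \ref{Lemma-PG-V}. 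A secondary point of care is that the window $[\lceil n/2\rceil,n]$ appearing in \eqref{ln_ma_lower} reaches back to $n/2$, which is why the induction must be seeded on an interval $[N_1,2N_1]$ long enough to contain that window at its first use, rather than at a single point.
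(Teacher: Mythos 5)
Your proposal is correct and follows essentially the same route as the paper: the upper bound by induction through \eqref{ln_ma_upper} and \eqref{sum-w}, and the lower bound by showing that the coefficient sum in \eqref{ln_ma_lower} is $1-O(\rho^{-n/2})-O(e^{-cn})$, a summable error, and then iterating a $J_n$-type minimum over the window $[\lceil n/2\rceil, n]$. The only cosmetic difference is that the paper controls the tail $\sum_{k>\lfloor n/2\rfloor}w_k$ by Cauchy--Schwarz against $E(\rho^{2W_{\tau_1}})$ while you use the $2^k$-weighting trick; both rest on Lemma \ref{Lemma-PG-V} and yield the same exponential decay.
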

\begin{proof}
We first prove the upper bound on $\ell(n)$. {Denote
\bn
T_{n}=\sup_{k\leq n} \ell(k).
\en
Note that $\ell(n) \leq 1$ for $n\leq 0$, hence $T(n)<\infty$  for all $n$.}
By \eqref{ln_ma_upper} and \eqref{sum-w} we have
\bn
\ell(n+1) \leq  T_{n}\sum_{k=0}^{\infty} w_k
 = T_{n}.
\en
Hence $T_{n+1}\leq T_{n}$ and immediately follows $\sup_{n\geq 1}\ell(n)\leq T_0 = 1$.

Next we prove the lower bound. Let $\const =\const(m^{-1})$, and
\bn
J_{n}=\min\{\ell(n),\ell(n-1)...,\ell(\lfloor n/2\rfloor )\}.
\en
Firstly, note that
\be \label{rf1}
\aligned
\sum_{k=\lfloor n/2 \rfloor}^{\infty}w_{k} &=\sum_{k= \lfloor n/2 \rfloor }^{\infty} \const^{k+1} E\big(m^{ \tau_{1}}\indic_{\{W_{ \tau_{1}}=1+k\}}\big) \\
&\leq E\big(\const^{W_{\tau_{1}}}\indic_{\{W_{\tau_1}>\lfloor n/2 \rfloor\}}\big).
\endaligned
\ee
By the Cauchy--Schwarz inequality  and Lemma \ref{Lemma-PG-V}, we get that there exists $C>0$ such that for every $k\geq 1$,
\be
\aligned
E\big(\const^{W_{ \tau_{1}}}\indic_{\{W_{ \tau_{1}}>k\}}\big)
&\leq  \sqrt{E\big(\const^{2W_{ \tau_{1}}}\big)\cdot P\big(W_{\tau_{1}}>k\big)} \\
&\leq \sqrt{\frac{\left(E\big(\const^{2W_{ \tau_{1}}}\big)\right)^2}{\rho^{2k}}}\\
&\leq C\rho^{-k}.
\endaligned
\ee
Furthermore, use the bound $1-e^{-x} \leq x$ for $x>0$ to get
\be  \label{rf3}
\aligned
p_{k}(m)-p_{k}(\xi_{n})
&=E\big(m^{ \tau_{1}}(1-e^{-e^{-cn}\tau_1})\indic_{\{W_{ \tau_{1}}=1+k\}}\big) \\
&\leq e^{-cn}E\big(\tau_1 m^{ \tau_{1}}\indic_{\{W_{ \tau_{1}}=1+k\}}\big).
\endaligned
\ee
Therefore, by \eqref{ln_ma_lower}, \eqref{sum-w},  and (\ref{rf1})--(\ref{rf3}), for all $n$ large enough,
 \be \label{ln_inductive_lower_bd}
 \aligned
\ell(n+1)
&\geq \sum_{k=0}^{\lfloor n/2 \rfloor}  \const^{k+1} \left(p_{k}(m)-e^{-cn}E\big(\tau_1 m^{ \tau_{1}}\indic_{\{W_{ \tau_{1}}=1+k\}} \big)\right)\cdot \ell(n-k)  \\
&\geq J_{n}\left(\sum_{k=0}^{\lfloor n/2 \rfloor}w_{k} -e^{-cn}E \Big(\tau_1  m^{ \tau_{1}} \sum_{k=0}^{\infty} \rho^{k+1}\indic_{\{W_{ \tau_{1}}=1+k\}} \Big) \right)  \\
&\geq J_{n}\big(1- C\rho^{- n/2} - C' e^{-c n}\big),
\endaligned
\ee
where in the last inequality we used the fact that $\sup_{k\geq 0} k m^k <\infty$ so that
\[
 E \Big(\tau_1  m^{ \tau_{1}} \sum_{k=0}^{\infty} \rho^{k+1}\indic_{\{W_{ \tau_{1}}=1+k\}} \Big)
 \leq C_1  E \Big(\sum_{k=0}^{\infty} \rho^{k+1}\indic_{\{W_{ \tau_{1}}=1+k\}} \Big)
 = C_1 E(\rho^{W_{\tau_1}}),
\]
which is finite by Lemma \ref{Lemma-PG-V}.
It follows that
\[
J_{n+1}\geq J_{n}\big(1- C \rho^{- n/2} - C' e^{-c n}\big).
\]
Since
$
\prod_{n\geq 1}\big(1- C \rho^{- n/2} - C' e^{-c n} \big)>0,
$
we get that
$
\liminf_{n\rr\infty }J_{n} >0,
$
which implies that
$
\liminf_{n\rr\infty }\ell_{n} >0.
$
\end{proof}

 Before we give the next lemma we introduce some additional definitions.
 Define
 \bq \label{al-def}
 \kappa = \limsup_{n\rr\infty }\ell(n)\in (0,1].
 \eq
Let $\{n_{k}\}_{k\geq 0}$ be a subsequence which satisfies
\bn
\lim_{k\rr\infty }\ell(n_{k})=\kappa.
\en
Recall that
$
w_{k}= \const\big(m^{-1}\big)^{k+1} p_{k}(m)
$
satisfy $\sum_{k=0}^{\infty}w_{k}=1$ by  \eqref{sum-w}.

\begin{lemma} \label{lemma-lim}
 If $W$ {has a finite right range $R$} and is nearly right-continuous, then
for every $\eps>0$, there exists $N_{0}>0$ such that if $n_{k}>N_{0}$, then
 \begin{itemize}
 \item[\bf{(a)}]
$
\ell(n_{k}-i)>\kappa - \eps, \  \textrm{for every } i=0,...,R-1.
$
 \item[\bf{(b)}] There exists $C>0$ such that
\be\label{ln_lower_bd}
\ell(n_{k}+i)>\kappa - \eps -C\sum_{r=0}^{i-1}e^{-c (n_{k}+r)}, \  \textrm{for every } i=0,1,...
\ee
\item[\bf{(c)}] Consequently, $\lim_{n\rr\infty }\ell(n)=\kappa.$
\end{itemize}
 \end{lemma}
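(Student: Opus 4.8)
The plan is to exploit the structural simplification that occurs when $W$ has finite right range $R$ and is nearly right-continuous. Since the first jump of $W$ has size at most $R$, the overshoot $W_{\tau_1}-1$ takes values only in $\{0,1,\dots,R-1\}$, so $p_k(m)=0$ for all $k\ge R$, while nearly right-continuity gives $p_k(m)\ge m\,a_{k+1}>0$ for $k=0,\dots,R-1$. Consequently $w_k=\const\big(m^{-1}\big)^{k+1}p_k(m)>0$ for $0\le k\le R-1$, $w_k=0$ for $k\ge R$, and $\sum_{k=0}^{R-1}w_k=1$ by \eqref{sum-w}; moreover, since $p_k(\xi_n)\le p_k(m)$, the lower recursion \eqref{ln_ma_lower} also truncates: for all $n$ large enough (so that $\lfloor n/2\rfloor\ge R-1$),
\[
\ell(n+1)\ \ge\ \sum_{k=0}^{R-1}\const\big(m^{-1}\big)^{k+1}p_k(\xi_n)\,\ell(n-k).
\]
Write $w_*=\min_{0\le k\le R-1}w_k>0$. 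Both recursions thus express $\ell(n+1)$ (up to a small error in the lower one) as a \emph{genuine} convex combination of $\ell(n),\ell(n-1),\dots,\ell(n-R+1)$, and this is the engine of the whole proof.

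For part (a), fix $\eps>0$ and set $\delta=w_*\eps/2$. Since $\limsup_n\ell(n)=\kappa$ we have $\ell(n)<\kappa+\delta$ for all large $n$, and $\ell(n_k)\to\kappa$. Suppose that for some $i\in\{1,\dots,R-1\}$ and some large $n_k$ one had $\ell(n_k-i)\le\kappa-\eps$. Writing the upper recursion \eqref{ln_ma_upper} at $n+1=n_k$ as $\ell(n_k)\le\sum_{j=0}^{R-1}w_j\,\ell(n_k-1-j)$, the term $\ell(n_k-i)$ carries weight $w_{i-1}\ge w_*$, while every other term is at most $\kappa+\delta$; hence $\ell(n_k)\le w_{i-1}(\kappa-\eps)+(1-w_{i-1})(\kappa+\delta)\le\kappa-w_*\eps+\delta=\kappa-w_*\eps/2$, contradicting $\ell(n_k)\to\kappa$. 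Choosing $N_0$ large enough rules this out for all $i=1,\dots,R-1$ simultaneously, and the case $i=0$ is immediate; this is (a).

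For part (b) I will induct on $i$ with hypothesis $\ell(n_k+j)>\kappa-\eps-C\sum_{r=0}^{j-1}e^{-c(n_k+r)}$ for $j=0,1,\dots,i$, the base case $i=0$ being (a). For the inductive step, apply the truncated lower recursion at $n+1=n_k+i+1$ and replace $p_{k'}(\xi_{n_k+i})$ by $p_{k'}(m)-e^{-c(n_k+i)}E\big(\tau_1 m^{\tau_1}\indic_{\{W_{\tau_1}=1+k'\}}\big)$ via \eqref{rf3}. The main term is $\sum_{k'=0}^{R-1}w_{k'}\,\ell(n_k+i-k')$; its arguments run through $n_k+i,n_k+i-1,\dots,n_k+i-R+1$, and each is either of the form $n_k+j$ with $0\le j\le i$, where the inductive hypothesis applies (and $C\sum_{r=0}^{j-1}e^{-c(n_k+r)}\le C\sum_{r=0}^{i-1}e^{-c(n_k+r)}$), or lies in $\{n_k-R+1,\dots,n_k-1\}$, where part (a) applies; since $w_{k'}\ge0$ and $\sum_{k'=0}^{R-1}w_{k'}=1$, this main term is at least $\kappa-\eps-C\sum_{r=0}^{i-1}e^{-c(n_k+r)}$. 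The remaining error is bounded in absolute value by $e^{-c(n_k+i)}\sum_{k'=0}^{R-1}\const\big(m^{-1}\big)^{k'+1}E\big(\tau_1 m^{\tau_1}\indic_{\{W_{\tau_1}=1+k'\}}\big)$, which is at most $C_0\,e^{-c(n_k+i)}$ for a finite constant $C_0$, by Lemma \ref{Lemma-PG-V} and $\sup_k km^k<\infty$, exactly as in the proof of Lemma \ref{lemma-ln-bnd}. Taking $C\ge C_0$ closes the induction and yields (b).

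Finally, part (c) follows by summing the geometric tail of the error: for $n_k>N_0$ and every $i\ge0$, part (b) gives $\ell(n_k+i)\ge\kappa-\eps-C\,e^{-cn_k}/(1-e^{-c})$, so once $n_k$ is large enough this reads $\ell(n)>\kappa-2\eps$ for every $n\ge n_k$, whence $\liminf_{n\to\infty}\ell(n)\ge\kappa-2\eps$; letting $\eps\downarrow0$ and recalling $\limsup_n\ell(n)=\kappa$ from \eqref{al-def} gives $\lim_n\ell(n)=\kappa$. The main obstacle is the induction in (b): one has to arrange the bookkeeping so that the finitely many boundary values $\ell(n_k-1),\dots,\ell(n_k-R+1)$ reached by the recursion are supplied by part (a), and so that the per-step errors $O(e^{-c(n_k+i)})$ accumulate into exactly the stated geometric bound rather than growing with $i$.
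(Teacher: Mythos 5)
Your proof is correct and takes essentially the same route as the paper's. Part (a) uses the upper recursion \eqref{ln_ma_upper} together with $\sum w_k=1$, the positive weight lower bound $w_*=\min_{k<R}w_k>0$ from near right-continuity, and the definition of $\limsup$; you phrase it as a contradiction while the paper rearranges the same inequality directly. Part (b) is the same induction on $i$ using the truncated lower recursion, the perturbation estimate \eqref{rf3}, $\sup_n\ell(n)\le 1$, and part (a) for the boundary indices $n_k-1,\dots,n_k-(R-1)$; and part (c) sums the geometric tail exactly as in the paper.
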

 \begin{proof}
(a) Define
$$
\underline w =\min_{k=0,...,R-1}w_{k}\ \in (0,1).
$$
{Note that $\underline w  >0$ since $W$ has a finite right range and is nearly right-continuous. }

Fix $N_{0}$ large enough such that
\bq \label{ell-ineqee}
\ell(n) \leq \kappa+\eps \underline w /2, \ \textrm{ for all } n>N_{0}-R.
\eq
and
\bq \label{ell-ineq}
\ell(n_{k}) > \kappa-\eps \underline w /2, \ \textrm{ for all } n_{k}>N_{0}.
\eq
Since $W$ has a finite right range $R$,
\be \label{p-k-finite}
p_{k}(m)=0, \q \textrm{ for all } k\geq R.
\ee
Hence, by  (\ref{ln_ma_upper}), (\ref{sum-w}), (\ref{ell-ineqee}) and \eqref{ell-ineq}  we get for every $n_{k}>N_{0}$,
\[
\aligned
\kappa-\eps \underline w /2
&<\ell(n_{k})\leq \sum_{j=0}^{R-1}  w_{j}\ell(n_{k}-j-1)   \\
&\leq  w_{i}\ell(n_{k}-i-1)+ (1-w_{i})(\kappa+\eps \underline w /2), \ \textrm{ for every } i=0,...,R-1.
\endaligned
\]
This immediately implies (a).

\smallskip
(b) We will prove (b) by induction.

By (a),  \eqref{ln_lower_bd} holds for $i=0$.
Now suppose that \eqref{ln_lower_bd} is satisfied for $i$. Observe that \eqref{ln_inductive_lower_bd}, (\ref{p-k-finite})
 and the fact that $\sup_n \ell(n)\leq 1$ proven in Lemma \ref{lemma-ln-bnd} imply that there exists $C>0$ such that for all~$n$ large enough,
\[
\ell(n+1)
\geq \sum_{j=0}^{R-1} w_j {\ell(n-j)} -  Ce^{-c n}.
\]
Hence by the induction hypothesis we obtain that
\bn
 \ell(n_{k}+i+1)
 &\geq&  \sum_{j=0}^{R-1}  w_{j}\ell(n_{k}+i-j) -Ce^{-c (n_{k}+i)} \\
 &\geq&  \sum_{j=0}^{R-1}  w_{j}\Big(\kappa-\eps-C\sum_{r=0}^{i-j-1}e^{-c (n_{k}+r)}\Big) -Ce^{-c (n_{k}+i)} \\
 &\geq&  \sum_{j=0}^{R-1}  w_{j}\Big(\kappa-\eps-C\sum_{r=0}^{i-1}e^{-c (n_{k}+r)}\Big) -Ce^{-c (n_{k}+i)} \\
  &=&  \kappa-\eps  -C\sum_{r=0}^{i}e^{-c (n_{k}+i)},
\en
i.e., \eqref{ln_lower_bd} holds for $i+1$.

\smallskip
(c) By (\ref{al-def}) and Part (b) we get that for every $\eps>0$, there exist $C>0$ and $N_{0}>0$ such that for every $n_{k}>N_{0}$,
 \bn
\kappa +\eps > \ell(n_{k}+i)>\kappa - \eps -Ce^{-c n_{k}}, \  \textrm{for all } i=0,1,...,
\en
and the conclusion follows.
 \end{proof}

Now we are ready to prove Theorem \ref{theorem-max-dis}.
\begin{proof}[Proof of Theorem \ref{theorem-max-dis}]
(a) and (b) have been proved in Lemmas \ref{lemma-ln-bnd} and \ref{lemma-lim}(c) respectively.

(c) Define for every $n\geq1$,
 \bq \label{l-bar}
 \bar \ell(n)=\const(m^{-1})^{n} E\big(m^{\tau_{n}}\big).
 \eq
Recall that $p_{k}(m)$ was defined in (\ref{p-k}). From the strong Markov property we have
 \bn
 E\big(m^{\tau_{n+1}}\big) &=&\sum_{k=0}^{\infty}E\big(m^{\tau_{1}}\mathds{1}_{\{W_{\tau_{1}}=k+1\}}m^{\tau_{n+1}-\tau_{1}}\big) \\
 &=&\sum_{k=0}^{\infty}E\big(m^{\tau_{1}}\mathds{1}_{\{W_{\tau_{1}}=k+1\}}\big)E\big(m^{\tau_{n-k}}\big)\\
&=&\sum_{k=0}^{\infty}p_{k}(m)E\big(m^{\tau_{n-k}}\big).
  \en
Therefore for every $n\geq 1$,
 \bq \label{l-bar}
  \bar \ell(n+1)= \sum_{k=0}^{\infty} \const(m^{-1})^{k+1}p_{k}(m)\ \bar \ell(n-k).
 \eq
 Just as in the proof of Lemma \ref{lemma-ln-bnd}, we can use (\ref{l-bar}) to show that $  \bar \ell(n)$ is bounded from both below and above. In fact, the proof in this case is simpler since we have an exact recursion equation for $\bar \ell(n)$ instead of bounds as in Lemma \ref{lemma-lineq}. It follows that
 \bn
\log \const(m^{-1})= - \lim_{n\rr\infty}\frac{\log E\big(m^{\tau_{n}}\big)}{n}.
 \en
\end{proof}

\section{Proof of Theorem \ref{theorem-max-dis-upto}}  \label{Sec-pf-upto}
We first derive a lower bound of $M_n$ in terms of $M$.
Let $\const =\const(m^{-1})$.
\begin{lemma}  \label{lemma-1-bnd}
For every $c\in\big(0,-\frac{\log m}{\log \const}\big)$, we have
\[
\lim_{n\to\infty} P\big( M_{n} \geq c n \ |\ M\geq cn\big) =1.
\]
\end{lemma}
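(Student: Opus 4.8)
The plan is to write the conditional probability as one minus a ratio and then bound the numerator and denominator separately, following the heuristic in Section~\ref{sec-Heuristics}. Since $\{M_n\geq cn\}\subseteq\{M\geq cn\}$, we have
\[
P\big(M_n\geq cn\mid M\geq cn\big)=1-\frac{P\big(M\geq cn,\ M_n<cn\big)}{P(M\geq cn)},
\]
so it suffices to show that the ratio on the right tends to $0$. (Since $M$ is $\zz{Z}$-valued one may replace $cn$ by $\lceil cn\rceil$ throughout and read $P(M\geq cn)=u(\lceil cn\rceil)$; the ceilings only affect multiplicative constants below.)

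For the numerator I would argue exactly as in \eqref{rn0}--\eqref{rn1}. On the event $\{M\geq cn,\ M_n<cn\}$, level $cn$ is first reached at some generation $k>n$, which forces the existence of a vertex $v$ with $|v|=k$ and $X(v)\geq cn$. Conditioning on the genealogical tree and using that the position of each generation-$k$ particle has the law of $W_k$, the first-moment (Markov) bound gives
\[
P\big(M\geq cn,\ M_n<cn\big)\ \leq\ \sum_{k>n}E\big[\#\{v:|v|=k,\ X(v)\geq cn\}\big]\ =\ \sum_{k>n}m^{k}P\big(W_k\geq cn\big)\ \leq\ \sum_{k>n}m^{k}\ =\ \frac{m^{n+1}}{1-m}.
\]
Here even the trivial bound $P(W_k\geq cn)\leq 1$ is enough; this is precisely what makes the threshold $-\log m/\log\const$ appear.

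For the denominator I would invoke Theorem~\ref{theorem-max-dis}(a): since $\liminf_{n}\const^{n}u(n)>0$, there is $c_0>0$ with $u(j)\geq c_0\,\const^{-j}$ for all large integers $j$, hence $P(M\geq cn)=u(\lceil cn\rceil)\geq c_0\,\const^{-cn-1}$ for $n$ large. Combining the two estimates,
\[
\frac{P\big(M\geq cn,\ M_n<cn\big)}{P(M\geq cn)}\ \leq\ \frac{m\,\const}{c_0(1-m)}\,\big(m\,\const^{c}\big)^{n},
\]
and the hypothesis $c<-\log m/\log\const$ is exactly equivalent to $m\,\const^{c}<1$, so the right-hand side vanishes as $n\to\infty$, which proves the lemma. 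The proof has no genuine obstacle: the only points requiring care are the routine conditioning-on-the-tree argument behind the first-moment bound (identical to the one already used for \eqref{rn1}) and the harmless ceiling bookkeeping. The real input is the \emph{sharp} exponential rate $\lim_{n}(-\log u(n)/n)=\log\const$ supplied by Theorem~\ref{theorem-max-dis}(a); without knowing the exact rate one could not identify the cutoff $-\log m/\log\const$.
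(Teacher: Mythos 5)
Your proof is correct and takes essentially the same approach as the paper: decompose the conditional probability as $1$ minus a ratio, bound the numerator by a first-moment estimate of order $m^n$, and bound the denominator from below via Theorem~\ref{theorem-max-dis}(a). The only difference is cosmetic: the paper bounds the numerator more directly by observing that $\{M\geq cn,\ M_n<cn\}\subseteq\{Z_n\geq 1\}$ (since the process has died out once $Z_n=0$, forcing $M=M_n$), which gives $P(M\geq cn,\ M_n<cn)\leq P(Z_n\geq1)\leq E(Z_n)=m^n$; your sum over $k>n$ with the trivial bound $P(W_k\geq cn)\leq1$ yields the equivalent estimate $m^{n+1}/(1-m)$.
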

\begin{proof}
We have
\bn
P\big(M\geq cn\big)
= P\big( M\geq cn,\ M_{n}< cn \big)+ P\big( M_{n}\geq cn \big),
\en
hence
\[
P\big( M_{n} \geq c n \ |\ M\geq cn\big) =1 - \frac{P\big( M\geq cn,\ M_{n}< cn \big)}{P\big(M\geq cn\big) }.
\]
Observe that
\[
  P\big( M\geq cn,\ M_{n}< cn \big)\leq P(Z_n\geq 1)\leq E(Z_n) = m^n.
\]
By our choice of $c$ and Theorem \ref{theorem-max-dis} we then get that
\[
\frac{P\big( M\geq cn,\ M_{n}< cn \big)}{P\big(M\geq cn\big) }\to 0,
\]
and the conclusion follows.
\end{proof}

Lemma \ref{lemma-1-bnd} can be reformulated in the following more informative way, which indicates that the main contribution to the event $\{M\geq n\}$ is $\{M_{an}>n\}$ for some $a>0.$
\begin{cor}\label{cor:M_Mn}
For every $a>-\frac{\log \const}{\log m}$, we have
\[
\lim_{n\to\infty} P\big( M_{an} \geq  n \ |\ M\geq n\big) =1.
\]
\end{cor}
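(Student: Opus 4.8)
The plan is simply to reparametrize Lemma \ref{lemma-1-bnd}. Fix $a > -\frac{\log\rho}{\log m}$; since $0 < m < 1$ and $\rho = \rho(m^{-1}) > 1$, the quantity $-\frac{\log\rho}{\log m}$ is positive, so such $a$ exist and $a > 0$. Set $c = 1/a$. Then $c \in \big(0, -\frac{\log m}{\log\rho}\big)$: indeed $a > -\frac{\log\rho}{\log m}$ is equivalent, upon taking reciprocals and using that both sides are positive, to $\frac{1}{a} < -\frac{\log m}{\log\rho}$, i.e. $c < -\frac{\log m}{\log\rho}$. Hence Lemma \ref{lemma-1-bnd} applies with this $c$.

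Now I would run Lemma \ref{lemma-1-bnd} along the subsequence of multiples of $a$ (assuming for cleanliness that $a \in \mathbb{N}$, or otherwise replacing $an$ and $n$ by their integer parts, which only perturbs the exponents by a bounded multiplicative factor and does not affect the limit). Applying the lemma with the integer $N = an$ in place of $n$, and noting that $cN = c \cdot an = n$, we get
\[
P\big(M_{an} \geq n \ \big|\ M \geq n\big) = P\big(M_{N} \geq cN \ \big|\ M \geq cN\big) \to 1 \quad\text{as } n \to \infty,
\]
which is exactly the claim. If one wants to be careful about $a$ non-integer, one replaces $M_{an}$ by $M_{\lfloor an\rfloor}$ and observes that the bound $P(M \geq n,\, M_{\lfloor an\rfloor} < n) \leq E(Z_{\lfloor an\rfloor}) = m^{\lfloor an\rfloor}$ still decays faster than $\rho^{-n} \lesssim P(M \geq n)$ by Theorem \ref{theorem-max-dis}, precisely because $a > -\frac{\log\rho}{\log m}$ guarantees $m^{\lfloor an\rfloor} \rho^{n} \to 0$; the rest of the argument in Lemma \ref{lemma-1-bnd} goes through verbatim.

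There is essentially no obstacle here: the corollary is a cosmetic restatement of the lemma obtained by the substitution $c \leftrightarrow 1/a$, and the only thing to check is that the parameter ranges correspond, which is the elementary reciprocal inequality above. The one mild point requiring a word is the integer-part issue when $an$ is not an integer, handled as indicated by re-examining the single inequality $P(M \geq n,\, M_{\lfloor an\rfloor} < n) \leq m^{\lfloor an\rfloor}$ together with the lower bound $P(M \geq n) \geq \mathrm{const}\cdot\rho^{-n}$ from Theorem \ref{theorem-max-dis}(a).
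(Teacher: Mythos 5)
Your proposal is correct and takes the same approach the paper intends: the paper gives no separate proof of Corollary \ref{cor:M_Mn}, presenting it as a reformulation of Lemma \ref{lemma-1-bnd} via the substitution $c=1/a$, which is exactly what you do. Your parameter-range check ($a > -\log\const/\log m \iff 1/a < -\log m/\log\const$) and your handling of the integer-part issue by re-running the single estimate $P(M\geq n, M_{\lfloor an\rfloor}<n)\leq m^{\lfloor an\rfloor}$ against the lower bound $P(M\geq n)\gtrsim \const^{-n}$ from Theorem \ref{theorem-max-dis}(a) are both accurate.
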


We are now ready to prove Theorem \ref{theorem-max-dis-upto}.
\begin{proof}[Proof of Theorem \ref{theorem-max-dis-upto}]
Parts (a) and (b) are direct consequences of Lemma \ref{lemma-1-bnd} and Theorem \ref{theorem-max-dis}.

Now we prove (c). We first need to show that there exists $c<\infty$ such that $\rho^{cn} P(M_n\geq cn)\to 0$. This is equivalent to that when $a>0$ is small enough,
\begin{equation}\label{Man_exp_tail}
 \rho^{n} P(M_{an} \geq n) \to 0.
\end{equation}
By a similar argument to that for \eqref{Mn_exp_tail}, we can show that
\[
P(M_{an} \geq n) \leq \sum_{k\leq an} m^k \frac{(K(\rho + 1))^{an}}{(\rho + 1)^n}\leq \frac{(K(\rho + 1))^{an}}{(1-m)(\rho + 1)^n}.
\]
The convergence \eqref{Man_exp_tail} follows by taking $a>0$ small enough so that $\rho\cdot (K(\rho + 1))^{a}/(\rho + 1) < 1$.

Next we show that if the random walk $W$ has a finite right range $R$, then $\ol{\delta} < R$. This
is equivalent to that there exists $a> 1/R$ such that
\eqref{Man_exp_tail} holds.
We first extend \eqref{Man_decomp} to current general setting when there can be multiple particles at each generation.
Similar to \eqref{rn1} and using the finite right range assumption, we have
\[
P\big(M_{a n}\geq n\big)
\leq \sum_{n/R\leq j\leq a n} m^j P(W_j \geq n).
\]

Next, since $\rho > 1$ satisfies that
\[
K(\rho)=\sum_{y\leq R} \rho^y a_y = \frac{1}{m},
\]
we have
\be\label{rho_m_aR}
  m \rho^R a_R < 1,
\ee
and hence
\[
\rho^n m^{n/R}P(W_{n/R}=n) \leq \left(\rho^R m a_R\right)^{n/R}\to 0.
\]
(The ``<'' sign is included to cover the case when $n/R\not\in\zz{N}$ in which case\\ $P(W_{n/R}=n)=0$.)
Therefore to show \eqref{Man_exp_tail}, it is sufficient to prove that there exists $a>1/R$ such that
\be\label{pgf_lower_bd}
 \rho^n \sum_{n/R< j\leq a n} m^{j}P(W_j \geq n) \to 0.
\ee
Exactly as in \eqref{Man_lower_bd_binary} we have
\[
  \rho^n \sum_{n/R< j\leq a n} m^{j}P(W_j \geq n)
 \leq \sum_{n/R<j\leq a n} \exp\left(f(j/n,\theta)\ n\right),
\]
where
\[
f(x,\theta):=\log\rho -\log \theta + x\cdot \log(m K(\theta)),\q\mbox{for all } x> 1/R \mbox{ and }\theta \geq 1.
\]
For any fixed $x> 1/R$, differentiating $f$ with respect to $\theta$ shows that $f(x,\cdot)$ attains its minimum at $\theta^*=\theta^*(x)$ which is the unique solution to
\be\label{eq_theta}
\frac{1}{x} = \frac{\theta K'(\theta)}{K(\theta)}.
\ee
The solution exists and is unique since if we denote the function on the right hand side by $h(\theta)$, then
\begin{enumerate}
\item[(i)] $h(1)=0$ and $\lim_{\theta\to\infty} h(\theta)=R$; and
\item[(ii)]
\[
h'(\theta)= \frac{\sum_{y< z\leq R} (y^2 + z^2 - 2yz ) \theta^{y+z-1} a_y a_z}{(K(\theta))^2}>0.
\]
\end{enumerate}
Denote
\[\aligned
 g(x) &= \log\rho -\log \theta^* + x\cdot \log(m K(\theta^*))\\
      &=\log\rho + x\log m+ \log x + \log(K'(\theta^*)) + (x-1)\log(K(\theta^*)).
 \endaligned
\]
It is then sufficient to show that $g(x)$ is negative as $x\to 1/R +$. This is true because as $x\to 1/R+$, we have $1/x\to R-$, and according to equation \eqref{eq_theta}, we must have that $\theta^*\to\infty$. It is then easy to show that
\[
 \lim_{x\to 1/R+} (\log(K'(\theta^*)) + (x-1)\log(K(\theta^*))
 =\log R + \frac{\log (a_R)}{R}.
\]
Further note
\[
\lim_{x\to 1/R+} (\log\rho + x\log m+ \log x)=\log(\rho m^{1/R}) -\log R.
\]
Combining the two limits above with \eqref{rho_m_aR} we see that indeed  $g(x)$ is negative as $x\to 1/R +$, and \eqref{pgf_lower_bd} follows.
\end{proof}

\section*{Acknowledgements}
We are very grateful to an anonymous referee for  careful reading of the manuscript,
and for a number of useful comments and suggestions that significantly improved this paper.

\bigskip
\noindent Eyal Neuman: Department of Mathematics, University of Rochester, Rochester, 14627 NY, USA. eneuman4@ur.rochester.edu

\medskip
\noindent Xinghua Zheng: Department of Information Systems
 Business Statistics and Operations Management, Hong Kong University of Science and
Technology, Clear Water Bay, Kowloon, Hong Kong. xhzheng@ust.hk

\end{document}